\numberwithin{equation}{section}
\date{\today}
\def\captionof#1#2{{\def\@captype{#1}#2}}
\newtheorem{lemma}{Lemma}[section]
\newtheorem{remark}{\bf Remark}
\newtheorem{theorem}{\bf Theorem}[section]
\def\bgneqnn{\begin{equation}}
\def\endeqnn{\end{equation}}
\def\bgneqy{\begin{eqnarray}}
\def\endeqy{\end{eqnarray}}
\def\bgneqy*{\begin{eqnarray*}}
\def\endeqy*{\end{eqnarray*}}
\def\text{\mbox}
\def\bgneqy*{\begin{eqnarray*}}
\def\endeqy*{\end{eqnarray*}}
\def\qed{\hfill$\blacksquare$\par\bigskip}
\newcounter{tablegroup}
\newcounter{subtable}[tablegroup]
\newcommand{\handletables}
\def\DST{\displaystyle}
\begin{document}
\title[General Decay in some Timoshenko-type systems...]{General Decay in some Timoshenko-type systems with thermoelasticity
second sound}
\author[Ayadi]{Mohamed Ali Ayadi}
\address{UR ANALYSE NON-LIN\'EAIRE ET G\'EOMETRIE, UR13ES32, Department of Mathematics, Faculty of Sciences of Tunis, University of Tunis El-Manar, 2092 El Manar II, Tunisia}
\email{ayadi.dali23@gmail.com}
\author[Bchatnia]{Ahmed Bchatnia}
\address{UR ANALYSE NON-LIN\'EAIRE ET G\'EOMETRIE, UR13ES32, Department of Mathematics, Faculty of Sciences of Tunis, University of Tunis El-Manar, 2092 El Manar II, Tunisia}
 \email{ahmed.bchatnia@fst.rnu.tn}
 \author[Hamouda]{Makram Hamouda}
\address{Institute for Scientific Computing and Applied Mathematics, Indiana University, 831 E. 3rd St., Rawles Hall, Bloomington IN 47405, United States}
\email{mahamoud@indiana.edu}
\author[Messaoudi]{Salim Messaoudi}
\address{Departement of Mathematics and Statistics, King Fahd University of Petroleum and Minerals, Dhahran 31261, Saudi Arabia}
\email{messaoud@kfupm.edu.sa}

\begin{abstract}
In this article, we consider a vibrating nonlinear Timoshenko system with thermoelasticity with second sound. We discuss the well-posedness and the regularity of Timoshenko solution using the semi-group theory. Moreover, we etablish an explicit and general decay results for a wide class of relaxating functions which depend on a stability number $\mu$.
\end{abstract}

\subjclass[2010]{35B40, 74F05, 74F20, 93D15, 93D20}
\keywords{Timoshenko system; well-posedness; general decay; stability.}

\maketitle

\tableofcontents

\section{Introduction and setting of the problem}
\selectlanguage{english}

Beams represent the most common structural component found in civil
and mechanical structures. Because of their ubiquity they are extensively
studied, from an analytical viewpoint, in mechanics of materials. A
widely used mathematical model for describing the transverse vibrations of
beams is based on Timoshenko beam theory \textbf{TBT} (or thick beam theory)
developed by Timoshenko in the 1920's. The \textbf{TBT} accounts for both
the effect of rotational inertia and shear deformation that occur within a
beam as it vibrates. These factors are neglected when applied to
Euler-Bernoulli beam theory \textbf{EBT} (or thin beam theory), which is
appropriate for beams with small cross-sectional dimensions compared to the
length. In fact, a fundamental assumption in \textbf{EBT} is that cross
sections remain plane and normal to the deformed longitudinal axis
throughout deformation, while in \textbf{TBT} cross sections remain plane
but do not remain normal to the deformed longitudinal axis as the shear
deformation is taken into account. The cross section rotation from the
reference to the current configuration is denoted by $\varphi $ in both
models. In the \textbf{EB} model, this is the same as the rotation of the
longitudinal axis. In the Timoshenko model, the difference is used as
measure of mean shear distortion.

In 1921, Timoshenko \cite{Timoshenko S} gave the following system of coupled hyperbolic
equations
\begin{equation}
\left\lbrace
\begin{array}{l}
\rho u_{tt}=(K(u_{x}-\varphi ))_{x},\mbox{ }\,\hspace{2.52cm}\mbox{ in }(0,L)\times
\mathrm{I\hskip-2ptR}_{+}, \\
I_{\rho }\varphi _{tt}=(EI\varphi _{x})_{x}+K(u_{t}-\varphi ),\hspace{1.1cm}\mbox{ in
}\,(0,L)\times \mathrm{I\hskip-2ptR}_{+},
\end{array}\right. \label{Timo1}
\end{equation}
together with boundary conditions of the form
\[
EI\varphi _{x}|_{x=0}^{x=L}=0,\quad (u_{x}-\varphi )|_{x=0}^{x=L}=0,
\]
as a simple model describing the transverse vibrations of a beam. Here $t$
denotes the time variable and $x$ is the space variable along the beam of
length $L,$ in its equilibrium configuration, $u$ is the transverse
displacement of the beam and $\varphi $ is the rotation angle of the
filament of the beam. The coefficients $\rho ,I_{\rho },E,I$ and $K$ are
respectively the density (the mass per unit length), the polar moment of
inertia of a cross section, Young's modulus of elasticity, the moment of
inertia of a cross section, and the shear modulus.

System (\ref{Timo1}), with the above given boundary conditions, is conservative and
the natural energy of the beam, given by
\[
\mathcal{E}(t)=\frac{1}{2}\int_{0}^{L}\left( \rho |u_{t}|^{2}+I_{\rho
}|\varphi _{t}|^{2}+EI|\varphi _{x}|^{2}+K|u_{x}-\varphi |^{2}\right) dx,
\]
remains constant in time.

Vibration has long been known for its capacity of disturbance, discomfort,
damage and destruction. Since a long time, many researchers have been investigating ways to control this phenomenon. However, with the development of
control theory for partial differential equations over the last few decades,
it is not surprising that the issue of stability and controllability of
Timoshenko-type systems has received a great attention of many
mathematicians. One effective method for vibration control is passive
damping. Damping is most beneficial when used to reduce the amplitude of
dynamic instabilities, or resonances, in a structure.\newline

Damping is the conversion of mechanical energy of a structure into thermal
energy. A structure subject to oscillatory deformation contains a
combination of kinetic and potential energy.

 A damping effect may be caused by applying the beam to internal or boundary
frictional mechanisms. Depending of the nature of the beam's material, a damping effect may be rotating beam. For Viscoelastic materials with long memory, some beams are characterized by possessing both
viscous and elastic behavior. As a result of this behavior, some of the
energy stored in a viscoelastic system is recovered upon removal of the
load, and the remainder is dissipated in the form of heat.

Kim and Renardy \cite{KimRenardy} considered (\ref{Timo1}) together with two boundary controls of
the form
\begin{eqnarray*}
K\varphi (L,t)-K\frac{\partial u}{\partial x}(L,t) &=&\alpha \frac{\partial u%
}{\partial t}(L,t)\quad \forall t\geq 0, \\
EI\frac{\partial \varphi }{\partial x}(L,t) &=&-\beta \frac{\partial \varphi
}{\partial t}(L,t)\quad \forall t\geq 0,
\end{eqnarray*}
and used the multiplier techniques to establish an exponential decay result
for the natural energy of (\ref{Timo1}). They also provided numerical estimates to the
eigenvalues of the operator associated with system (\ref{Timo1}). An analogous result
was also established by Feng \textit{et al.} \cite{FengShi}, where the stabilization of
vibrations in a Timoshenko system was studied. Raposo \textit{et al}. \cite{RaposoFerreira}
studied (\ref{Timo1}) with homogeneous Dirichlet boundary conditions and two linear
frictional dampings. Precisely, they looked into the following system
\begin{equation}
\left\lbrace
\begin{array}{l}
\rho _{1}u_{tt}-K(u_{x}-\varphi )+u_{t}=0,\hspace{2.38cm}\mbox{ in }\,(0,L)\times
\mathrm{I\hskip-2ptR}_{+}, \\
\rho _{2}\varphi _{tt}-b\varphi _{xx}+K(u_{x}-\varphi )+\varphi _{t}=0,\hspace{1cm}
\mbox{ in }\,(0,L)\times \mathrm{I\hskip-2ptR}_{+}, \\
u(0,t)=u(L,t)=\varphi (0,t)=\varphi (L,t)=0,\quad t>0
\end{array}\right. \label{Timo2}
\end{equation}
and proved that the energy associated with (\ref{Timo2}) decays exponentially.
Soufyane and Wehbe \cite{SoufyaneWehbeUnifo} showed that it is possible to stabilize uniformly
(\ref{Timo1}) by using a unique locally distributed feedback. They considered
\begin{equation}
\left\lbrace
\begin{array}{l}
\rho u_{tt}=(K(u_{x}-\varphi ))_{x},\hspace{3.7cm}\mbox{ in }\,(0,L)\times \mathrm{I\hskip-2ptR}_{+},
\\
I_{\rho }\varphi _{tt}=(EI\varphi _{x})_{x}+K(u_{x}-\varphi )-b\varphi
_{t},\hspace{0.9cm}\mbox{ in }\,(0,L)\times \mathrm{I\hskip-2ptR}_{+} ,\\
u(0,t)=u(L,t)=\varphi (0,t)=\varphi (L,t)=0,\quad t>0,
\end{array}\right. \label{Timo3}
\end{equation}
where $b$ is a positive and continuous function, which satisfies
\[
b(x)\geq b_{0}>0,\quad \forall \mbox{ }x \in
[a_{0},a_{1}]\subset [0,L].
\]
In fact, they proved that the uniform stability of (\ref{Timo3}) holds if and only if
the wave speeds are equal $\left( \frac{K}{\rho }=\frac{EI}{I_{\rho }}%
\right) ;$ otherwise only the asymptotic stability has been proved. Rivera
and Racke \cite{MunozRiveraRackeTimo} obtained a similar result in a work, where the damping
function $b=b(x)$ is allowed to change sign. They also in treated \cite{MunozRiveraRackeGlobal} a nonlinear Timoshenko-type system of the form
\[\left\lbrace
\begin{array}{l}
\rho _{1}\varphi _{tt}-\sigma _{1}(\varphi _{x},\psi )_{x}=0, \\
\rho _{2}\psi _{tt}-\chi (\psi _{x})_{x}+\sigma _{2}(\varphi _{x},\psi
)+d\psi _{t}=0,
\end{array}\right.
\]
in a one-dimensional bounded domain. The dissipation here is through
frictional damping which is only in the equation for the rotation angle. The
authors gave an alternative proof for a sufficient and necessary condition
for exponential stability in the linear case and then proved a polynomial
stability in general. Moreover, they investigated the global existence of
small smooth solutions and exponential stability in the nonlinear case.

Shi and Feng \cite{ShiFeng} used the frequency multiplier method to investigate a
nonuniform Timoshenko beam and showed that, under some locally distributed
controls, the vibration of the beam decays exponentially. The nonuniform
Timoshenko beam has also been studied by Ammar-Khodja \textit{et al.} \cite{AmmarKerbal}
and a similar result to that in \cite{ShiFeng} has been established.

Ammar-Khodja \textit{et al.} \cite{AmmarBenabdallah} considered a linear Timoshenko-type system
with memory of the form
\begin{equation}
\left\lbrace
\begin{array}{l}
\rho _{1}\varphi _{tt}-K(\varphi _{x}+\psi )_{x}=0, \\
\rho _{2}\psi _{tt}-b\psi _{xx}+\int_{0}^{t}g(t-s)\psi _{xx}(s)ds+K(\varphi
_{x}+\psi )=0, \\
\varphi (x,0)=\varphi _{0}(x),\mbox{ }\varphi _{t}(x,0)=\varphi _{1}(x), \\
\psi (x,0)=\psi _{0}(x),\mbox{ }\psi _{t}(x,0)=\psi _{1}(x) \\
\varphi (0,t)=\varphi (1,t)=\psi (0,t)=\psi (1,t)=0,
\end{array}\right. \label{Timo4}
\end{equation}
in$\,(0,L)\times \mathrm{I\hskip-2ptR}_{+},$ and proved, using the multiplier techniques,
that the system is uniformly stable if and only if the wave speeds are equal
$\left( \frac{K}{\rho _{1}}=\frac{b}{\rho _{2}}\right) $ and $g$ decays
uniformly. More precisely, they proved an exponential decay if $g$ decays in an
exponential rate and polynomially if $g$ decays in a polynomial rate. They
also required some extra technical conditions on both $g^{\prime }$ and $%
g^{\prime \prime }$ to obtain their results. This result has been later
improved by Messaoudi and Mustafa \cite{MessaoudiMustafa4} and Guesmia and Messaoudi \cite{GuesmiaMessaoudi1}, where
the technical conditions on $g^{\prime \prime }$ have been removed and those
on $g^{^{\prime }}$ have been weakened$.$ Also, Guesmia and Messaoudi \cite{GuesmiaMessaoudi2}
considered the following system
\begin{equation}
\left\{
\begin{array}{l}
\rho _{1}\varphi _{tt}-K(\varphi _{x}+\psi )_{x}=0, \\
\rho _{2}\psi _{tt}-\kappa \psi _{xx}+\int_{0}^{t}g(t-\tau )(a(x)\psi
_{x}(\tau ))_{x}d\tau +K(\varphi _{x}+\psi )
+b(x)h(\psi _{t})=0 ,\\
\varphi (x,0)=\varphi _{0}(x),\mbox{ }\varphi _{t}(x,0)=\varphi _{1}(x), \\
\psi (x,0)=\psi _{0}(x),\mbox{ }\psi _{t}(x,0)=\psi _{1}(x), \\
\varphi (0,t)=\varphi (1,t)=\psi (0,t)=\psi (1,t)=0,
\end{array}
\right.\label{Timo5}
\end{equation}
in$\,(0,1)\times \mathrm{I\hskip-2ptR}_{+}.$ They proved under similar conditions on the relaxation function $g$, which are similar to those in \cite{FernandezRacke}, and by assuming that
\[
a(x)+b(x)\geq \rho >0, \ \forall x\in (0,1),
\]
an exponential stability for $g$ decaying exponentially and $h$ linear, and
polynomial stability when $g$ decays polynomially and $h$ is nonlinear.

Concerning stabilization via heat effect, Rivera and Racke \cite{MunozRiveraRackeMildy} investigated
the following system
\[\left\lbrace
\begin{array}{l}
\rho _{1}\varphi _{tt}-\sigma (\varphi _{x},\psi )_{x}=0,\hspace{3.2cm}\mbox{ in }\,(0,L)\times \mathrm{I\hskip-2ptR}_{+}, \\
\rho _{2}\psi _{tt}-b\psi _{xx}+K(\varphi _{x}+\psi )+\gamma \theta _{x}=0,\hspace{0.5cm}\mbox{ in }\,(0,L)\times \mathrm{I\hskip-2ptR}_{+}
,\\
\rho _{3}\theta _{t}-K\theta _{xx}+\gamma \psi _{xt}=0,\hspace{2.8cm}\mbox{ in },(0,L)\times \mathrm{I\hskip-2ptR}_{+},
\end{array}\right.
\]
where $\varphi ,\psi ,\theta $ are functions of $(x,t)$ model the transverse
displacement of the beam, the rotation angle of the filament, and the
difference temperature respectively. Under appropriate conditions on $\sigma
,\rho _{i},b,K,\gamma ,$ they proved several exponential decay results for
the linearized system and non exponential stability result for the case of
different wave speeds.

Concerning Timoshenko systems of thermoelasticity with second sound,
Messaoudi \textit{et al.} \cite{MessaoudiPokojovy} studied
\[\left\lbrace
\begin{array}{l}
\rho _{1}\varphi _{tt}-\sigma (\varphi _{x},\psi )_{x}+\mu \varphi
_{t}=0,\hspace{2.52cm}\mbox{ in }\,(0,L)\times  \mathrm{I\hskip-2ptR}_{+} ,\\
\rho _{2}\psi _{tt}-b\psi _{xx}+k(\varphi _{x}+\psi )+\beta \theta
_{x}=0,\hspace{1.1cm}\mbox{ in }\, (0,L)\times  \mathrm{I\hskip-2ptR}_{+}, \\
\rho _{3}\theta _{t}+\gamma q_{x}+\delta \psi _{tx}=0,\hspace{3.7cm}\mbox{ in }\,
(0,L)\times  \mathrm{I\hskip-2ptR}_{+} ,\\
\tau _{0}q_{t}+q+\kappa \theta _{x}=0,\hspace{4.3cm}\mbox{ in }\, (0,L)\times  \mathrm{I\hskip-2ptR}_{+},
\end{array}\right.
\]
where $\varphi =\varphi (x,t)$ is the displacement vector, $\psi =\psi (x,t)$
is the rotation angle of the filament, $\theta =\theta (x,t)$ is the
temperature difference, $q=q(x,t)$ is the heat flux vector, $\rho _{1}$, $%
\rho _{2}$, $\rho _{3}$, $b$, $k$, $\gamma $, $\delta $, $\kappa $, $\mu $, $%
\tau _{0}$ are positive constants. The nonlinear function $\sigma $ is
assumed to be sufficiently smooth and satisfy
\[
\sigma _{\varphi _{x}}(0,0)=\sigma _{\psi }(0,0)=k,
\]
and
\[
\sigma _{\varphi _{x}\varphi _{x}}(0,0)=\sigma _{\varphi _{x}\psi
}(0,0)=\sigma _{\psi \psi }=0.
\]
Several exponential decay results for both linear and nonlinear cases have
been established in the presence of the extra frictional damping $\mu
\varphi _{t}$.

Fern\'{a}ndez Sare and Racke \cite{FernandezRacke} considered
\begin{equation}
\left\lbrace
\begin{array}{l}
\rho _{1}\varphi _{tt}-k(\varphi _{x}+\psi )_{x}=0,\hspace{3.4cm}\mbox{ in }\,
(0,L)\times  \mathrm{I\hskip-2ptR}_{+}, \\
\rho _{2}\psi _{tt}-b\psi _{xx}+k(\varphi _{x}+\psi )+\delta \theta
_{x}=0,\hspace{1.11cm}\mbox{ in }\, (0,L)\times  \mathrm{I\hskip-2ptR}_{+}, \\
\rho _{3}\theta _{t}+\gamma q_{x}+\delta \psi _{tx}=0,\hspace{3.7cm}\mbox{ in }\,
(0,L)\times  \mathrm{I\hskip-2ptR}_{+}, \\
\tau q_{t}+q+\kappa \theta _{x}=0,\hspace{4.3cm}\mbox{ in }\,  (0,L)\times  \mathrm{I\hskip-2ptR}_{+},
\end{array}\right. \label{Timo6}
\end{equation}
and showed that, in the absence of the extra frictional damping ($\mu =0$),
the coupling via Cattaneo's law causes loss of the exponential decay
usually obtained in the case of coupling via Fourier's law \cite{MunozRiveraRackeMildy}. This surprising property holds even for systems with history of the form
\begin{equation}
\left\lbrace
\begin{array}{l}
\rho _{1}\varphi _{tt}-k(\varphi _{x}+\psi )_{x}=0,\hspace{7cm}\mbox{ in }\,
(0,L)\times  \mathrm{I\hskip-2ptR}_{+}, \\
\rho _{2}\psi _{tt}-b\psi _{xx}+k(\varphi _{x}+\psi )+\int_{0}^{+\infty
}g(s)\psi _{xx}(.,t-s)ds+\delta \theta _{x}=0,\hspace{0.1cm}\mbox{ in }\,
(0,L)\times  \mathrm{I\hskip-2ptR}_{+}, \\
\rho _{3}\theta _{t}+\gamma q_{x}+\delta \psi _{tx}=0, \hspace{7.33cm}\mbox{ in }\,
(0,L)\times  \mathrm{I\hskip-2ptR}_{+},\\
\tau q_{t}+q+\kappa \theta _{x}=0,\hspace{7.95cm}\mbox{ in }\,
(0,L)\times  \mathrm{I\hskip-2ptR}_{+},
\end{array}\right. \label{Timo7}
\end{equation}
 Precisely, it has been shown that both systems (\ref{Timo6}) and (\ref{Timo7}) are no longer
exponentially stable even for equal-wave speeds $\left( \frac{k}{\rho _{1}}=%
\frac{b}{\rho _{2}}\right) .$ However, no other rate of decay has been
discussed.\\
Very recently, Santos et al. \cite{SantosAlmeida} considered (\ref{Timo6}) and introduced a new
stability number
\[
\mu =\left( \tau -\frac{\rho _{1}}{k\rho _{3}}\right) \left( \frac{\rho _{2}}{b}-
\frac{\rho _{1}}{k}\right) -\frac{\rho _{1}\delta ^{2}\rho _{1}}{kb\rho _{3}},
\]
and used the semi-group method to obtain exponential decay result for $\mu =0
$ and a polynomial decay for $\mu \neq 0.$

The boundary feedback of memory type has also been used by Santos \cite{Santosdecay}. He
considered a Timoshenko system and showed that the presence of two feedbacks
of memory type at a portion of the boundary stabilizes the system uniformly.
He also obtained the rate of decay of the energy, which is exactly the rate
of decay of the relaxation functions. This last result has been improved and
generalized by Messaoudi and Soufyane \cite{MessaoudiSoufyane}. For more results concerning
well-posedness and controllability of Timoshenko systems, we refer the
reader to \cite{MessaoudiMustafa2,MessaoudiMustafa3}, \cite{MessaoudiSaid}, \cite{MunozRiveraRackeStabi}, \cite{ShiHou} and \cite{ShiFengYan,ShiFengExpo}.

In this paper we consider the following Timoshenko system:
\begin{equation}
\left\{
\begin{array}{l}
\rho _{1}\varphi _{tt}-k(\varphi _{x}+\psi )_{x}=0,\hspace{6.2cm}\textnormal{in }%
(0,1)\times \mathrm{I\hskip-2ptR}_{+}, \\
\rho _{2}\psi _{tt}-b \psi _{xx}+k(\varphi _{x}+\psi )
+\delta \theta _{x}+\alpha (t)h(\psi _{t})=0,\hspace{1.8cm} \textnormal{in }(0,1)\times
\mathrm{I\hskip-2ptR}_{+}, \\
\rho _{3}\theta _{t}+q_{x}+\delta \psi _{xt}=0,\hspace{6.73cm} \textnormal{in }(0,1)\times
\mathrm{I\hskip-2ptR}_{+}, \\
\tau q_{t}+\beta q+\theta _{x}=0,\hspace{7.2cm} \textnormal{in }(0,1)\times \mathrm{I\hskip%
-2ptR}_{+}, \\
\varphi _{x}(0,t)=\varphi _{x}(1,t)=\psi (0,t)=\psi (1,t)=q(0,t)=q(1,t)=0,%
\hspace{0.5cm}\forall\mbox{ }t\geq 0 ,\\
\varphi (x,0)=\varphi _{0}(x),\mbox{ }\varphi _{t}(x,0)=\varphi _{1}(x),\hspace{4.8cm}
\forall\mbox{ }x\in (0,1) ,\\
\psi (x,0)=\psi _{0}(x),\mbox{ }\psi _{t}(x,0)=\psi _{1}(x),\hspace{4.8cm}\forall\mbox{ }x\in
(0,1), \\
\theta (x,0)=\theta _{0}(x),\mbox{ }q(x,0)=q_{0}(x),\hspace{5.2cm}\forall\mbox{ }x\in (0,1),
\end{array}
\right.
\label{1}
\end{equation}
where, $\rho _{1}$, $%
\rho _{2}$, $\rho _{3}$, $b$, $k$, $\delta $, $\beta $ are positive constants, $\varphi =\varphi (x,t)$ is the displacement vector, $\psi =\psi (x,t)$
is the rotation angle of the filament, $\theta =\theta (x,t)$ is the
temperature difference and $q=q(x,t)$ is the heat flux vector.
Also, $ \alpha $ and $h$ are two functions to be fixed later.\\

Using $(\ref{1})_{1},\ (\ref{1})_{3}$ and the boundary conditions $\eqref{1}_{5}$, we have
$$\DST \frac{d^2}{dt^2}\int_{0}^{1}\varphi (x,t)dx=0 \mbox{ and } \frac{d}{dt}\int^{1}_{0} \theta (x,t)dx=0.$$
Consequently, we obtain
$$\int^{1}_{0} \varphi(x,t)dx=\left( \int^{1}_{0} \varphi _{1}(x)dx\right) t+\int^{1}_{0} \varphi_{0} (x)dx \mbox{ and } \int^{1}_{0} \theta (x,t)dx=\int^{1}_{0} \theta_{0} (x)dx.$$
If we set
$$\bar{\varphi}(x,t)=\varphi(x,t)-\left( \left( \int^{1}_{0} \varphi _{1}(x)dx\right) t+\int^{1}_{0} \varphi_{0} (x)dx\right), $$
and
$$\bar{\theta}(x,t)= \theta (x,t)-\int^{1}_{0} \theta_{0} (x)dx,$$
\\
then $(\bar{\varphi },\psi,\bar{\theta },q)$ satisfy also the system (\ref{1}), and we have
$$\int^{1}_{0} \bar{\varphi} (x,t)dx=0\hspace{1cm}\text{and}\hspace{1cm}\int^{1}_{0} \bar{\theta} (x,t)dx=0.$$
From now on, we use the new variables $(\bar{\varphi},\psi,\bar{\theta},q)$, but we denote them by $(\varphi,\psi,\theta,q)$, for simplicity. \\

The article is organized as follows. First, in Section \ref{section well}, we use the semi-group theory to prove the existence and uniqueness of solutions of system (\ref{1}). Next, in Section \ref{section stability}, we study the asymptotic behavior of the energy of solutions of system (\ref{1}) using the multiplier method. For that purpose, we assume some hypotheses on $\alpha$ and $h$. The optimal exponential and polynomial decay rate estimates can be obtained in some special cases with explicit nonlinear terms.\\
\section{Well-posedness and regularity}\label{section well}
In this section, we discuss the well-posedness of the problem (\ref{1}), using the semi-group theory. We consider the following hypotheses on $\alpha$ and $h$:\\
\vspace{0.3cm}
$(A_{1})$ : $\alpha$ : $\mathbb{R}_{+}\rightarrow \mathbb{R}_{+}$ is differentiable and decreasing.\\
\vspace{0.3cm}
$(A_{2})$ : $h : \mathbb{R}\rightarrow \mathbb{R}$ is a locally Lipschitz function satisfying  $h(0)=0 $.\\
We introduce the Hilbert space:
$$L_\star^2 (0,1)=\{v\in L^2 (0,1) \ :\ \DST \int_{0}^{1}v(s)ds=0 \},$$
$$H_\star^1 (0,1)= H^1 (0,1)\cap L_\star ^2(0,1),$$
$$H_\star^2 (0,1)=\{v\in H^2 (0,1) \ :\ v_x(0)=v_x(1)=0 \}.$$
The energy associated with the system (\ref {1}) is defined by:
$$\DST E(\varphi,\psi ,\theta ,q )(t)=\frac{1}{2}\int_{0}^{1} (\rho_{1}\varphi_{t}^{2}+\rho_{2}\psi _{t}^{2}+b\psi _{x}^{2}+k(\varphi_{x}+\psi )^{2}+\rho_{3}\theta^{2}+\tau q^{2})dx.$$
Let
$$\DST \ H=H_{\star}^{1}(0,1)\times L^{2}_{\star}(0,1)\times
H^{1}_{0}(0,1)\times L^{2}(0,1)\times L^{2}_{\star}(0,1)\times L^{2}(0,1),$$
be the Hilbert space endowed with the inner product defined, for \\$U=(
u_{1},u_{2},u_{3},u_{4},u_{5},u_{6})^{t}\in H,\ V=(
v_{1},v_{2},v_{3},v_{4},v_{5},v_{6}%
)^{t}\in H$, by
$$\DST \left\langle U,V\right\rangle _{H}= \rho_{1}\left\langle
u_{2},v_{2}\right\rangle _{L^{2}(0,1)}+\rho_{2}\left\langle
u_{4},v_{4}\right\rangle _{L^{2}(0,1)}
+k\left\langle u_{1x}+u_{3},v_{1x}+v_{3}\right\rangle _{L^{2}(0,1)}$$
$$+b\left\langle
u_{3x},v_{3x}\right\rangle _{L^{2}(0,1)}+\rho_{3}\left\langle
u_{5},v_{5}\right\rangle _{L^{2}(0,1)}+\tau \left\langle
u_{6},v_{6}\right\rangle _{L^{2}(0,1)}.$$
\vspace{0.2cm}\\
For $\Phi=(\varphi,u,\psi ,v,\theta,q)^{t}$ and $\Phi_{0}=(\varphi_{0},\varphi_{1},\psi_{0} ,\psi_{1},\theta_{0},q_{0})^{t}$, where $u=\varphi_{t}$ and $v=\psi _{t}$, (\ref{1}) is equivalent to the abstract first order Cauchy problem 
 \begin{equation}
  \left\{
\begin{array}{c}
\DST \frac{d }{dt}\Phi(t)+(A+B)\Phi(t)=0,\hspace{1cm}\forall \ t\in \mathbb{R}_{+}, \\
\Phi(0)=\Phi_{0},%
\end{array}%
\right.\label{firstorder}
 \end{equation}
 where $A :D(A)\subset H\longrightarrow H $ is the linear operator defined by
 \begin{equation}
A\Phi=\left(
\begin{array}{cccc}
-u \\ -\frac{k}{\rho_{1}}\varphi _{xx} -\frac{k}{\rho_{1}}\psi_{x}  \\ -v \\  -\frac{b}{\rho_{2}}\psi_{xx}+\frac{k}{\rho_{2}} (\varphi _{x}+\psi)+\frac{\delta}{\rho_{2}}\theta_{x} \\  \frac{1}{\rho_{3}}q_{x}+\frac{\delta}{\rho_{3}}v_{x} \\ \frac{\beta}{\tau}q + \frac{1}{\tau}\theta_{x}
\end{array}%
\right ),\label{a*fi}
\end{equation}
and $B :D(B)\subset H\longrightarrow H $ is the nonlinear operator defined by
$$\DST
B\Phi=
\left(
\begin{array}{cccc}
0 \\ 0 \\ 0 \\ \alpha(t) h(v) \\ 0 \\ 0
\end{array}%
\right ).$$
The domain of the operator $A$ is given by
$\DST D (A)=\lbrace \Phi \in H\ ;\ A\Phi \in H \rbrace$
and endowed with the graph norm
$$\Vert\Phi \Vert_{D(A)} =\Vert \Phi\Vert_{H} +\Vert A\Phi\Vert_{H},$$
can be characterized by
\[\DST
D(A)=(H^{2}_{*}( 0,1)\cap H_{*}^{1}( 0,1)) \times
H^{1}_{*}( 0,1) \times (H^{2}( 0,1)\cap
H_{0}^{1}( 0,1)) $$
$$\times H^{1}_{0}( 0,1)\times H^{1}_{*}( 0,1) \times H^{1}_{0}( 0,1).
\]
The domain of the operator B is given by $D(B)=\lbrace  \Phi \in H\ ;\ B\Phi \in H\rbrace = H.$ \\
We first state and prove the following lemmas which will be useful to deduce the well-posedness result.

\begin{lemma}\label{lemmonoton}
For $\Phi \in D(A)$, we have $(A\Phi,\Phi )_{H}\geq0$.
\end{lemma}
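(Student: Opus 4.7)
The proof will be a direct computation exploiting the boundary conditions encoded in $D(A)$. The plan is to expand $(A\Phi,\Phi)_H$ according to the six-piece definition of $\langle\cdot,\cdot\rangle_H$, then integrate by parts on every term containing a spatial derivative, and finally observe that all cross terms cancel in pairs, leaving only a manifestly nonnegative quantity. Concretely, for $\Phi=(\varphi,u,\psi,v,\theta,q)^t\in D(A)$ we get
\begin{align*}
(A\Phi,\Phi)_H
 &= -k\int_0^1(\varphi_{xx}+\psi_x)u\,dx
    +\int_0^1\bigl(-b\psi_{xx}+k(\varphi_x+\psi)+\delta\theta_x\bigr)v\,dx\\
 &\quad-k\int_0^1(u_x+v)(\varphi_x+\psi)\,dx
       -b\int_0^1 v_x\psi_x\,dx\\
 &\quad+\int_0^1(q_x+\delta v_x)\theta\,dx
       +\int_0^1(\beta q+\theta_x)q\,dx.
\end{align*}

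The key step is to integrate by parts using the boundary conditions available from $D(A)$: $\varphi_x(0)=\varphi_x(1)=0$, $\psi(0)=\psi(1)=0$, $v(0)=v(1)=0$ (since $v\in H_0^1$), and $q(0)=q(1)=0$. I expect four pairs of cancellations: (i) the term $-k\int\varphi_{xx}u\,dx=k\int\varphi_x u_x\,dx$ combined with $-k\int\psi_x u\,dx=k\int\psi u_x\,dx$ cancels the $k\int u_x(\varphi_x+\psi)\,dx$ piece from the third line; (ii) the term $k\int(\varphi_x+\psi)v\,dx$ in the second line cancels the $-k\int v(\varphi_x+\psi)\,dx$ in the third line; (iii) $-b\int\psi_{xx}v\,dx=b\int\psi_x v_x\,dx$ cancels the $-b\int v_x\psi_x\,dx$ from the fourth line; (iv) the heat flux terms $\int q_x\theta\,dx=-\int q\theta_x\,dx$ cancel the $\int\theta_x q\,dx$ from the last line; (v) finally the two $\delta$-coupling contributions combine into $\delta\int(v\theta)_x\,dx=\delta[v\theta]_0^1=0$ by virtue of $v(0)=v(1)=0$.

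After all these cancellations the only surviving contribution is the Cattaneo dissipation $\beta\int_0^1 q^2\,dx$, so
\[
(A\Phi,\Phi)_H=\beta\int_0^1 q^2\,dx\;\geq 0,
\]
which proves the lemma. The main obstacle is purely bookkeeping: being careful with signs, remembering that $\psi(0)=\psi(1)=0$ kills the $\psi u$ boundary term even though $u$ is only in $H^1_\star$ (not vanishing at the endpoints), and that $v\in H_0^1$ is what makes the crucial $\delta$-cancellation in step (v) work. No analytic estimate or inequality is required.
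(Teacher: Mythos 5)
Your proof is correct and follows exactly the same route as the paper: expand the inner product according to the definition of $\langle\cdot,\cdot\rangle_H$, integrate by parts using the boundary conditions $\varphi_x=\psi=v=q=0$ at the endpoints, and observe that everything cancels except the Cattaneo term $\beta\int_0^1 q^2\,dx$. Your version merely makes explicit the pairwise cancellations that the paper leaves to the reader; all five are verified correctly.
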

\begin{proof}For any $\Phi=(\varphi,u,\psi ,v,\theta,q)^{t}\in D(A)$, we have
$$(A\Phi,\Phi)_{H}=k\int_{0}^{1}-(u_{x}+v)(\varphi_{x}+\psi))dx+\int_{0}^{1}(-k\varphi_{xx}-k\psi_{x})udx +b \int_{0}^{1} -v_{x}\psi_{x}dx$$$$+ \int_{0}^{1}(-b\psi_{xx}+k(\varphi_{x}+\psi)+\delta \theta_{x})vdx + \int_{0}^{1}(q_{x}+\delta v_{x})\theta dx +\int_{0}^{1}(\beta q+ \theta_{x})qdx. $$

Using integration by parts and the boundary conditions in (\ref{1}), we obtain \\
$$(A\Phi,\Phi)_{H}=\beta\int_{0}^{1}q^{2}dx\geq 0.$$
This ends the proof of the lemma.
\end{proof}
\begin{lemma}\label{lemsurjective}
 $I+A$ is a surjective operator.
\end{lemma}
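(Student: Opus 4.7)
Given $F = (f_1, f_2, f_3, f_4, f_5, f_6)^t \in H$, I will solve $(I+A)\Phi = F$ by reducing it to a coercive variational problem in $(\varphi, \psi, \theta)$, invoking the Lax--Milgram theorem, and then bootstrapping regularity to land in $D(A)$. The first, third, and sixth components immediately give $u = \varphi - f_1$, $v = \psi - f_3$, and $q = (\tau f_6 - \theta_x)/(\tau+\beta)$ (the last pending sufficient regularity of $\theta$). Substituting into the remaining components produces a closed elliptic system for $(\varphi, \psi, \theta)$ whose $\theta$-equation reads
\[
\rho_3 \theta - \frac{1}{\tau+\beta}\theta_{xx} + \delta\psi_x = \rho_3 f_5 + \delta f_{3x} - \frac{\tau}{\tau+\beta}f_{6x},
\]
with $f_{6x}$ understood distributionally.

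The variational problem lives on $V := H^1_\star(0,1) \times H^1_0(0,1) \times H^1_\star(0,1)$. I multiply each reduced equation by the corresponding test component $\bar\varphi$, $\bar\psi$, $\bar\theta$, integrate by parts, and shift the distributional source $f_{6x}$ onto the test function via $-\int f_{6x}\bar\theta = \int f_6 \bar\theta_x$ (the boundary terms thereby created will be absorbed into the natural condition $q(0)=q(1)=0$ at the end). This yields a problem of the form $a(U,\bar U) = L(\bar U)$ for all $\bar U \in V$, with $a$ continuous bilinear and $L$ continuous linear (the latter makes sense precisely because $f_6$ appears only as $\int f_6 \bar\theta_x$).

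Coercivity is the main point. On the diagonal, the thermo-mechanical cross-couplings cancel: using $\psi(0)=\psi(1)=0$, integration by parts gives $\delta\int_0^1\theta_x\psi\,dx + \delta\int_0^1\psi_x\theta\,dx = \delta[\theta\psi]_0^1 = 0$. Consequently
\[
a(U,U) = \rho_1\|\varphi\|_{L^2}^2 + k\|\varphi_x+\psi\|_{L^2}^2 + \rho_2\|\psi\|_{L^2}^2 + b\|\psi_x\|_{L^2}^2 + \rho_3\|\theta\|_{L^2}^2 + \frac{1}{\tau+\beta}\|\theta_x\|_{L^2}^2.
\]
Applying Young's inequality in the form $k\|\varphi_x+\psi\|^2 \geq k(1-\epsilon)\|\varphi_x\|^2 - k(\epsilon^{-1}-1)\|\psi\|^2$ with $\epsilon \in (k/(k+\rho_2),1)$ leaves strictly positive coefficients on $\|\varphi_x\|^2$ and $\|\psi\|^2$; combined with the $\rho_1\|\varphi\|^2$, $b\|\psi_x\|^2$, $\rho_3\|\theta\|^2$, and $\|\theta_x\|^2$ terms this gives $a(U,U) \geq c\|U\|_V^2$. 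Lax--Milgram then produces a unique $U = (\varphi,\psi,\theta) \in V$, and $q := (\tau f_6 - \theta_x)/(\tau+\beta) \in L^2$ completes the candidate.

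Finally, I bootstrap regularity. Testing against $C_c^\infty(0,1)$ bumps recovers the three reduced PDEs in $\mathcal{D}'(0,1)$, hence $\varphi_{xx}, \psi_{xx} \in L^2$ and $(\theta_x - \tau f_6)_x \in L^2$, so $\varphi, \psi \in H^2$ and $q \in H^1$. Testing against $\bar\varphi \in H^1_\star$ with arbitrary boundary traces forces $\varphi_x(0) = \varphi_x(1) = 0$; similarly, testing against $\bar\theta \in H^1_\star$ with arbitrary boundary traces and using the identity $\theta_x - \tau f_6 = -(\tau+\beta)q$ (whose left-hand side now lies in $H^1 \subset C^0$) yields $q(0) = q(1) = 0$. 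Hence $\Phi = (\varphi, \varphi - f_1, \psi, \psi - f_3, \theta, q) \in D(A)$ and $(I+A)\Phi = F$, establishing surjectivity. The principal obstacle is the mere $L^2$-regularity of $f_6$: it is handled by never differentiating $f_6$ in the weak form and by using the $H^1$-regularity of the combination $\theta_x - \tau f_6$, rather than $\theta_x$ alone, to read off the Dirichlet condition on $q$.
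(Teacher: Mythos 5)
Your argument is correct and shares the paper's overall skeleton -- eliminate three of the six unknowns, solve a variational problem for the remaining triple $(\varphi,\psi,\theta)$ by Lax--Milgram, then bootstrap regularity -- but the reduction itself is genuinely different. The paper keeps $\theta$ at the $L^{2}$ level: it integrates the fifth equation over $(0,x)$ to express $q$ through the antiderivative $\int_{0}^{x}\theta\,ds$, which yields a \emph{nonlocal} bilinear form on $H^{1}_{\star}\times H^{1}_{0}\times L^{2}$ whose coercivity rests on recognizing the cross terms as the perfect square $(\beta+\tau)\bigl\|\delta\psi+\rho_{3}\int_{0}^{x}\theta\,ds\bigr\|_{2}^{2}\ge 0$. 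You instead solve the Cattaneo equation algebraically for $q=(\tau f_{6}-\theta_{x})/(\tau+\beta)$, which turns the $\theta$-equation into a genuine second-order elliptic one and places $\theta$ in $H^{1}_{\star}$ already at the variational stage; the resulting form is local, the thermo-mechanical cross terms cancel exactly on the diagonal (via $\psi(0)=\psi(1)=0$), and the one delicate point -- the distributional derivative $f_{6x}$ of a merely $L^{2}$ datum -- is handled correctly by defining the source through $\int f_{6}\bar\theta_{x}$ and reading the condition $q(0)=q(1)=0$ off the $H^{1}$ combination $\theta_{x}-\tau f_{6}$ rather than off $\theta_{x}$ alone. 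Your version buys a cleaner coercivity estimate and extra regularity on $\theta$ for free; the paper's version avoids ever differentiating the sixth component, at the price of a nonlocal form. One small point, common to both write-ups and worth making explicit: the test space in the $\varphi$- and $\theta$-slots is the zero-mean space $H^{1}_{\star}$, so recovering the strong equations from compactly supported bumps requires the (easy) remark that the variational identity extends to constant test functions, because $\varphi,\theta,f_{1},f_{2},f_{5}$ have zero mean and $\psi,f_{3}$ vanish at the endpoints.
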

\begin{proof}
For any $W=(w_{1},w_{2},w_{3},w_{4},w_{5},w_{6})\in H$, we prove that there exists $V=(v_{1},v_{2},v_{3},v_{4},v_{5},v_{6})\in D(A)$ satisfying $$(I+A)V=W.$$
That is,
\begin{equation}\label{surjective}
\left\lbrace
\begin{array}{l}
-v_{2}+v_{1}=w_{1},\\
 -kv_{1xx}-kv_{3x}+\rho_{1}v_{1}=\rho_{1}(w_{1}+w_{2}),\\
-v_{4}+v_{3}=w_{3},\\
 -bv_{3xx}+k(v_{1x}+v_{3})+\delta v_{5x}+\rho_{2}v_{4}=\rho_{2}w_{4},\hspace{1cm}\\ v_{6x}+\delta v_{4x}+\rho_{3}v_{5}=\rho_{3}w_{5},\\
(\beta +\tau)v_{6}+v_{5x}=\tau w_{6}.
\end{array}\right.
\end{equation}
Then $(\ref{surjective})_{1},(\ref{surjective})_{3} $ and $(\ref{surjective})_{5}$ yield
\begin{equation}
v_{2}=v_{1}-w_{1}\in H_{*}^{1}(0,1),\label{surj1}
\end{equation}
\begin{equation}
v_{4}=v_{3}-w_{3}\in H_{0}^{1}(0,1),\label{surj3}
\end{equation}
$$v_{6x}=\rho_{3}w_{5} + \delta w_{3x}-\delta v_{3x}-\rho_{3}v_{5}.$$
By integration over $(0,x)$ and using $v_{6}(0)=w_{3}(0)=v_{3}(0)=0$, we obtain
\begin{equation}
 v_{6}=\rho_{3}\int _{0}^{x}w_{5}ds + \delta w_{3}-\delta v_{3}-\rho_{3}\int_{0}^{x}v_{5}ds.\label{surj5}
 \end{equation}
We substitute (\ref{surj5}) into $(\ref{surjective})_{6}$ and we get

$$v_{5x}+(\beta +\tau)\left[ \rho_{3}\int _{0}^{x}w_{5}ds + \delta w_{3}-\delta v_{3}-\rho_{3}\int_{0}^{x}v_{5}ds \right] =\tau w_{6}.
$$
Hence, we deduce that
\begin{equation}
\DST -v_{5x}+(\beta+\tau)\delta v_{3}+\rho_{3}(\beta+\tau)\int_{0}^{x}v_{5}ds  =(\beta+\tau)\delta w_{3}+(\beta +\tau)\rho_{3}\int _{0}^{x}w_{5}ds -\tau w_{6}.
\label{surj6}
\end{equation}
Again, we substitute (\ref{surj6}) into $(\ref{surjective})_{4}$, we get
$$\DST -bv_{3xx}+kv_{1x}+kv_{3}+\delta \left[ (\beta+\tau)\delta v_{3}+\rho_{3}(\beta+\tau)\int_{0}^{x}v_{5}ds -(\beta+\tau)\delta w_{3}\right.  $$$$\left. -(\beta +\tau)\rho_{3}\int _{0}^{x}w_{5}ds -\tau w_{6}\right] +\rho_{2}v_{3}=\rho_{2}(w_{3}+w_{4}),$$
and we infer that
\begin{eqnarray}
\DST -bv_{3xx}+kv_{1x}+kv_{3}+\delta^{2}  (\beta+\tau)\delta v_{3}+\rho_{3}\delta (\beta+\tau)\int_{0}^{x}v_{5}ds +\rho_{2}v_{3} =(\beta+\tau)\delta^{2} w_{3}\nonumber\\
\DST +(\beta +\tau)\delta\rho_{3}\int _{0}^{x}w_{5}ds -\delta \tau w_{6} +\rho_{2}(w_{3}+w_{4}).\hspace{2cm}
\label{surj4}
\end{eqnarray}
By using (\ref{surj6}), (\ref{surj4}) and $(\ref{surjective})_{2}$, it can be shown that $v_{1}$, $v_{3}$ and $v_{5}$ satisfy

\begin{equation}\label{mult1}
\left\lbrace
\begin{array}{l}
\DST -kv_{1xx}-kv_{3x}+\rho_{1}v_{1}=h_{1}\in L^{2}_{*}(0,1),\\
 \DST -bv_{3xx}+kv_{1x}+kv_{3}+(\delta^{2}  (\beta+\tau) +\rho_{2})v_{3}+\rho_{3}\delta (\beta+\tau)\int_{0}^{x}v_{5}ds=h_{2}\in L^{2}(0,1),\\
\DST -\rho_{3}v_{5x}+\rho_{3}(\beta+\tau)\delta v_{3}+\rho_{3}^{2}(\beta+\tau)\int_{0}^{x}v_{5}=h_{3}\in L^{2}(0,1),
 \end{array}\right.
 \end{equation}
 where
\begin{equation}
\left\lbrace
\begin{array}{l}
h_{1}=\rho_{1}(w_{1}+w_{2}),\hspace{4cm}\nonumber\\
h_{2}=(\beta+\tau)\delta^{2} w_{3}+(\beta +\tau)\delta\rho_{3}\int _{0}^{x}w_{5}ds -\delta \tau w_{6} +\rho_{2}(w_{3}+w_{4}),\nonumber\\
 h_{3}=\rho_{3}(\beta+\tau)\delta w_{3}+(\beta +\tau)\rho_{3}^{2}\int _{0}^{x}w_{5}ds -\rho_{3}\tau w_{6}.\nonumber
\end{array}\right.
 \end{equation}
Let $u=(u_{1},u_{3},u_{5})$ and $v=(v_{1},v_{3},v_{5})$, a simple multiplication of $(\ref{mult1})_{1}$, $(\ref{mult1})_{2}$ and $(\ref{mult1})_{3}$, by $u_{1},u_{3} $ and $\DST \int_{0}^{x}u_{5}ds$ respectively, and integration over $(0,1)$ yield
\begin{eqnarray}
\hspace{1cm}&\bullet &-k\int_{0}^{1}v_{1xx}u_{1}dx-k\int_{0}^{1}v_{3x}u_{1}dx+\rho_{1}\int_{0}^{1}v_{1}u_{1}dx=\int_{0}^{1}h_{1}u_{1}dx,\label{fi}\\
\hspace{1cm}&\bullet & -b\int_{0}^{1}v_{3xx}u_{3}dx+k\int_{0}^{1}v_{1x}u_{3}dx+k\int_{0}^{1}v_{3}u_{3}dx+(\delta^{2}  (\beta+\tau) +\rho_{2})\int_{0}^{1}v_{3}u_{3}dx\nonumber\\
\hspace{1cm}&\ &\hspace{2cm}+\rho_{3}\delta (\beta+\tau)\int_{0}^{1}(\int_{0}^{x}v_{5}ds)u_{3}dx=\int_{0}^{1}h_{2}u_{3}dx,\nonumber\\
 &\bullet &-\rho_{3}\int_{0}^{1}v_{5x}(\int_{0}^{x}u_{5}ds)dx+\rho_{3}(\beta+\tau)\delta \int_{0}^{1}v_{3}(\int_{0}^{x}u_{5}ds)dx+\nonumber\\
& \ &\hspace{2cm} \rho_{3}^{2}(\beta+\tau)\int_{0}^{1}(\int_{0}^{x}v_{5}ds)(\int_{0}^{x}u_{5}ds)dx=\int_{0}^{1}h_{3}(\int_{0}^{x}u_{5}ds)dx.\nonumber
 \end{eqnarray}
 Using integration by parts and the boundary conditions yield

\begin{eqnarray}
&\bullet &k\int_{0}^{1}v_{1x}u_{1x}dx+k\int_{0}^{1}v_{3}u_{1x}dx+\rho_{1}\int_{0}^{1}v_{1}u_{1}dx=\int_{0}^{1}h_{1}u_{1}dx,\nonumber\\
&\bullet & b\int_{0}^{1}v_{3x}u_{3x}dx+k\int_{0}^{1}v_{1x}u_{3}dx+k\int_{0}^{1}v_{3}u_{3}dx+(\delta^{2}  (\beta+\tau) +\rho_{2})\int_{0}^{1}v_{3}u_{3}dx\nonumber\\
&\ &\hspace{2cm}+\rho_{3}\delta (\beta+\tau)\int_{0}^{1}(\int_{0}^{x}v_{5}ds)u_{3}dx=\int_{0}^{1}h_{2}u_{3}dx,\nonumber\\
 &\bullet &\rho_{3}\int_{0}^{1}v_{5x}u_{5}dx+\rho_{3}(\beta+\tau)\delta \int_{0}^{1}v_{3}(\int_{0}^{x}u_{5}ds)dx+\nonumber\\
& \ & \hspace{2cm}\rho_{3}^{2}(\beta+\tau)\int_{0}^{1}(\int_{0}^{x}v_{5}ds)(\int_{0}^{x}u_{5}ds)dx=\int_{0}^{1}h_{3}(\int_{0}^{x}u_{5}ds)dx.\nonumber
 \end{eqnarray}
The sum of the previous equations gives the following variational formulation
 \begin{equation}
   b(v,u)=l(u),\label{formulationvar}
   \end{equation}
 for all $u=(u_{1},u_{3},u_{5})\in  H_{*}^{1}(0,1)\times H_{0}^{1}(0,1)\times L^{2}_{*}(0,1) $, where b is defined by
$$b(v,u)=k\int_{0}^{1}(v_{1x}+v_{3})(u_{1x}+u_{3})dx+\rho_{1}\int_{0}^{1}v_{1}u_{1}dx+b\int_{0}^{1}v_{3x}u_{3x}dx$$$$+(\delta^{2}  (\beta+\tau) +\rho_{2})\int_{0}^{1}v_{3}u_{3}dx+\rho_{3}\delta (\beta+\tau)\int_{0}^{1}(\int_{0}^{x}v_{5}ds)u_{3}dx
 +\rho_{3}\int_{0}^{1}v_{5x}u_{5}dx$$$$+\rho_{3}(\beta+\tau)\delta \int_{0}^{1}v_{3}(\int_{0}^{x}u_{5}ds)dx+\rho_{3}^{2}(\beta+\tau)\int_{0}^{1}(\int_{0}^{x}v_{5}ds)(\int_{0}^{x}u_{5}ds)dx,$$
and $l$ is defined by
$$l(u)=\int_{0}^{1}h_{1}u_{1}dx
+\int_{0}^{1}h_{2}u_{3}dx
+\int_{0}^{1}h_{3}(\int_{0}^{x}u_{5}ds)dx.$$
We introduce the Hilbert space $\Lambda = H_{*}^{1}(0,1)\times H_{0}^{1}(0,1)\times L^{2} (0,1)$
 equipped with the norm
 $$\Vert v\Vert_{\Lambda}^{2}=\Vert v_{1x}+v_{3}\Vert^{2}_{2} + \Vert v_{1}\Vert^{2}_{2}  + \Vert v_{3x}\Vert ^{2}_{2} + \Vert v_{5}\Vert^{2}_{2}. $$
It is clear that b is a bilinear and continuous form on $\Lambda \times \Lambda$, and $l$ is a linear and continuous form on $\Lambda$. Furthermore, there exists a positive constant $c_{0}$ such that
$$b(v,v)= k\Vert  v_{1x}+v_{3}\Vert _{2}^{2}+\rho_{1}\Vert v_{1}\Vert_{2}^{2}+b\Vert v_{3x}\Vert_{2}^{2}+(\delta^{2}(\beta+\tau)+\rho_{2})\Vert v_{3}\Vert_{2}^{2}+\rho_{3}\Vert v_{5}\Vert_{2}^{2}$$
$$+2\rho_{3}(\beta+\tau)\delta \int_{0}^{1}v_{3}(\int_{0}^{x}v_{5}ds)dx+\rho_{3}^{2}(\beta+\tau)\int_{0}^{1}(\int_{0}^{x}v_{5}ds)^{2}dx$$
\hspace{2.5cm}$\geq c_{0}\Vert v\Vert_{\Lambda}^{2}.$
\\
which implies that b is coercive.\\ Therefore, using the Lax-Milgram theorem we conclude that the system (\ref{mult1}) has a unique solution $$(v_{1},v_{3},v_{5})\in (H_{*}^{1}(0,1)\times H_{0}^{1}(0,1)\times L_{*}^{2}(0,1)),$$
and we deduce from (\ref{surj1})-(\ref{surj5}) the existence of
$v_{2}\in  H_{*}^{1}(0,1),$ $v_{4}\in H_{0}^{1}(0,1),$ and
$v_{6}\in L_{*}^{2}(0,1))\subset L^{2}(0,1)).$ \\
Now, it remains to show that
$$v_{1}\in  H_{*}^{2}(0,1)\cap H_{*}^{1}(0,1),\ v_{3}\in  H^{2}(0,1)\cap H_{0}^{1}(0,1),\ \ v_{5}\in H_{*}^{1}(0,1)\ \text{ and }  v_{6}\in H_{0}^{1}(0,1).$$
From (\ref{mult1}), we have
$$-kv_{1xx}=kv_{3x}-\rho_{1}v_{1}+h_{1}\in L^{2}(0,1).$$
Consequently, it follows that $$v_{1}\in H^{2}(0,1)\cap H_{*}^{1}(0,1).$$
Moreover, (\ref{fi})  is also true for any $\varphi_{1} \in \mathcal{C}^{1}([0,1])$. Hence, we have
$$k\int_{0}^{1}v_{1x}\varphi_{1x}dx+k\int_{0}^{1}v_{3}\varphi_{1x}dx+\rho_{1}\int_{0}^{1}v_{1}\varphi_{1}dx=\int_{0}^{1}h_{1}\varphi_{1}dx,$$ for any $\varphi_1\in \mathcal{C}^{1}([0,1])$. Thus, using integration by parts we obtain
$$ v_{1x}(1)\varphi_{1}(1)- v_{1x}(0)\varphi_{1}(0) =0, \mbox{ for all } \varphi_1\in \mathcal{C}^{1}([0,1]).$$
Therefore, $v_{1x}(1)= v_{1x}(0)= 0,$ and we deduce that
$$v_{1}\in H^{2}_{*}(0,1)\cap H_{*}^{1}(0,1).$$
Now, we substitute $(\ref{surjective})_{6}$ into $(\ref{surjective})_{4}$, we get
$$bv_{3xx}=kv_{1x}+kv_{3}+\delta \tau w_{6}-\delta (\beta+\tau)v_{6} +\rho_{2}v_{3} -h_{2}\in L^{2}(0,1).$$
Consequently, it follows that $$v_{3}\in H^{2}(0,1)\cap H_{0}^{1}(0,1).$$
On the other hand, we get from $(\ref{surjective})_{6}$,
$$v_{5x}=\tau w_{6}-(\beta +\tau )v_{6}\in L^{2}(0,1),$$
and we deduce that $$v_{5}\in H^{1}(0,1)\cap L^{2}_{*}(0,1).$$
Similarly, from $(\ref{surjective})$ we have
$$v_{6x}=\rho_{3}w_{5} + \delta w_{3x}-\delta v_{3x}-\rho_{3}v_{5}\in L^{2}(0,1))\hspace{0.3cm}\text{ which implies } \hspace{0.3cm}v_{6}\in H_{0}^{1}(0,1),$$
as $v_{6}(0)=v_{6}(1)=0.$ \\
Finally, the operator $I+A$ is surjective.
\end{proof}

Using Lemmas \ref{lemmonoton} and \ref{lemsurjective}, we conclude that the operator $A+B$ is the infinitesimal generator of a non-linear contraction $C_{0}$-semi-group on the Hilbert space $H$.\\
Finally, by applying the semi-group theory to (\ref{firstorder}) (see \cite{Komornik,Pazy}), we easily get the following well-posedness result.
\begin{theorem}\label{thp}
Assume that $(A_{1})$ and $(A_{2})$ are satisfied, then for all initial data $$(\varphi_{0},\varphi_{1},\psi_{0},\psi_{1},\theta_{0},q_{0})\in (H^{2}_{\star}(0,1)\cap H_{\star}^{1}(0,1))\times H_{\star}^{1}(0,1)\times (H^{2}(0,1)\cap H_{0}^{1}(0,1))$$
$$\times H_{0}^{1}(0,1)\times H_{\star}^{1}(0,1)\times H_{0}^{1}(0,1),$$
the system (\ref {1}) has a unique solution $(\varphi,\psi,\theta,q)$ that verifies  \\
$\left. \right. \hspace{2cm}
(\varphi,\psi)\in C^{0}(\mathbb{R}_{+},(H^{2}_{\star}(0,1)\cap H_{\star}^{1}(0,1))\times(H^{2}(0,1)\cap H_{0}^{1}(0,1)))$\vspace{0.2cm} \\
$ \left. \right. \hspace{3cm}\cap \ C^{1}(\mathbb{R}_{+},H_{\star}^{1}(0,1)\times H_{0}^{1}(0,1))\cap \ C^{2}(\mathbb{R}_{+},L^{2}_{\star }(0,1)\times L^{2}(0,1)),$\\
and $$\hspace{1.8cm}(\theta,q)\in  \ C^{0}(\mathbb{R}_{+},H_{\star}^{1}(0,1)\times H_{0}^{1}(0,1))\cap \ C^{1}(\mathbb{R}_{+},L^{2}_{\star }(0,1)\times L^{2}(0,1)).$$
\end{theorem}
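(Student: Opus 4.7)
The plan is to combine Lemma \ref{lemmonoton} and Lemma \ref{lemsurjective} to conclude via the Lumer--Phillips (or Hille--Yosida) theorem that $-A$ generates a $C_0$-semigroup of contractions on $H$, and then to handle the nonlinear term $B$ as a locally Lipschitz perturbation. Lemma \ref{lemmonoton} says $A$ is monotone, i.e.\ $\langle A\Phi,\Phi\rangle_H\geq 0$ for every $\Phi\in D(A)$, while Lemma \ref{lemsurjective} says that $I+A$ is surjective. Together these two facts give exactly the Lumer--Phillips hypotheses (since $A$ is a linear closed densely defined operator on the Hilbert space $H$, and $D(A)$ is dense because it contains the smooth compactly supported pieces of every component), and so $-A$ generates a linear $C_0$-semigroup $\{e^{-tA}\}_{t\geq 0}$ of contractions.

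Next I would address the nonlinear perturbation $B$. Since by $(A_1)$ the function $\alpha$ is decreasing and non-negative, it is bounded by $\alpha(0)$ on $\mathbb{R}_+$; and by $(A_2)$ the function $h$ is locally Lipschitz with $h(0)=0$. Writing $B\Phi=(0,0,0,\alpha(t)h(v),0,0)^t$ where $v$ is the fourth component of $\Phi$, and noting that on bounded sets of $H$ the fourth component $v$ lies in $L^2(0,1)$ and (for data in $D(A)$) in $H^1_0(0,1)\hookrightarrow L^\infty(0,1)$, one checks that $(t,\Phi)\mapsto B\Phi$ is locally Lipschitz in $\Phi$ uniformly in $t$ on bounded time intervals. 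Applying the standard theory of nonlinear Lipschitz perturbations of $C_0$-semigroups (Pazy \cite{Pazy}, Theorem 6.1.4, or the variant in Komornik \cite{Komornik}) to \eqref{firstorder}, one obtains, for every initial datum $\Phi_0\in D(A)$, a unique local in time strong solution $\Phi\in C([0,T_{\max}),D(A))\cap C^1([0,T_{\max}),H)$.

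To extend the solution globally (i.e.\ to show $T_{\max}=+\infty$), I would derive an a priori energy estimate. Multiplying the equations by the natural multipliers leads to
\[
\frac{d}{dt} E(\varphi,\psi,\theta,q)(t)=-\beta\int_0^1 q^2\,dx-\alpha(t)\int_0^1 h(\psi_t)\psi_t\,dx,
\]
and, using $(A_1)$--$(A_2)$ together with the sign condition $sh(s)\geq 0$ (which is forced, up to an inessential linearization argument, by the dissipative nature of the damping term and is standard in this framework), the energy $E$ is non-increasing. This bounds the $H$-norm of $\Phi(t)$ uniformly on bounded time intervals, which rules out blow-up in finite time and yields global existence.

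Finally, the precise regularity classes announced in the theorem follow by unpacking the membership $\Phi\in C(\mathbb{R}_+,D(A))\cap C^1(\mathbb{R}_+,H)$ together with the explicit characterization of $D(A)$ given just before Lemma \ref{lemmonoton}: the first component in $H^2_\star\cap H^1_\star$, the third in $H^2\cap H^1_0$, the fifth in $H^1_\star$, the sixth in $H^1_0$, and the time-derivative components $u=\varphi_t$ and $v=\psi_t$ in $H^1_\star$ and $H^1_0$ respectively. The $C^2$-regularity in time for $(\varphi,\psi)$ then comes from reading the first and third lines of \eqref{1} and using the $H$-regularity of $\Phi_t$. I expect the only subtle step to be the verification that $B$ is indeed locally Lipschitz on the relevant function space (rather than merely continuous), since $h$ is only assumed locally Lipschitz on $\mathbb{R}$; this is handled precisely because $D(A)$ embeds into $L^\infty$ in the $\psi_t$ component, so that on bounded sets of $D(A)$ the arguments of $h$ stay in a fixed compact interval.
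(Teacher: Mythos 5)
Your proposal is correct and follows essentially the same route as the paper: the paper's entire proof of Theorem \ref{thp} consists of Lemma \ref{lemmonoton} (monotonicity of $A$) and Lemma \ref{lemsurjective} (surjectivity of $I+A$), followed by a one-line appeal to semigroup theory via \cite{Komornik,Pazy}, exactly the skeleton you use. Your write-up merely fills in the details the paper leaves implicit --- the Lumer--Phillips step, the treatment of $B$ as a locally Lipschitz perturbation (with the correct observation that the $L^\infty$ embedding of the $\psi_t$-component of $D(A)$ is what makes the Nemytskii map $v\mapsto h(v)$ Lipschitz on bounded sets), and the a priori energy bound for globality --- so it is, if anything, more complete than the original.
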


\section{Stability results}\label{section stability}
In this section, we state and prove a stability result for the nonlinear Timoshenko system \eqref{1}. For this purpose, we consider the following hypotheses:

$(A_{1})$ : $\alpha$ : $\mathbb{R}_{+}\rightarrow \mathbb{R}_{+}$ is a differentiable and decreasing function.

$(A_{2})^{*}$ : $h$ : $\mathbb{R}_{+}\rightarrow \mathbb{R}_{+}$ is a continuous non-decreasing function such that $h(0)=0$
and there exists a continuous strictly increasing odd function $h_{0}\in C([0,+\infty))$, continuously differentiable in a neighborhood of 0
and satisfying $h_{0}(0)=0$
\newline
$$ \left\{
\begin{array}{l}
h_{0}(\vert(s)\vert)\leq \vert h(s) \vert \leq h_{0}^{-1}(\vert(s)\vert),%
\hspace{2.7em} \text{for all}\hspace{1.2em} \vert s\vert\leq\varepsilon , \\
c_{1} \vert s \vert\leq \vert h(s)\vert \leq c_{2} \vert s\vert ,\hspace{%
6.4em}\text{ for all} \hspace{1.12em}\vert s\vert\geq\varepsilon .%
\end{array}
\right.$$ \\
where $c_{i} > 0$ for i = 1, 2. \\
Moreover, we define a function $H$ by
\begin{equation}
 H(x)=\sqrt{x}h_{0}(\sqrt{x})
 \end{equation}
Thanks to Assumption $(A_{2})^{*}$, $H$ is of class $C^{1}$ and is strictly convex on $(0,r^{2}]$, where $r > 0$ is
a sufficiently small number.

\begin{remark}\  \\
\begin{itemize}
\item  We denote by $c$ positive generic constant throughout this paper.
\item The hypothesis $A_{1}$ implies that $\alpha(t)\leq c$.
\end{itemize}
\end{remark}
We recall here the stability number defined by : $$\mu=\left[ (\tau -\frac{\rho_{1}}{k \rho_{3}})(\frac{\rho_{2}}{b}-\frac{\rho_{1}}{k})-\frac{\tau\delta^{2}\rho_{1}}{bk \rho_{3}}\right].$$
\subsection{The case $\mu = 0$}\label{sub31} \ \\
In this part, we state and prove the decay results which are not necessarily of exponential or polynomial types. For this purpose, we establish several lemmas. We recall that the energy associated with the system \eqref{1} is defined by
\begin{equation}
E(t):=\frac{1}{2}\int_{0}^{1}\left( \rho_{1} \varphi_{t}^{2}+\rho_{2}\psi _{t}^{2}+b \psi _{x}^{2}+k(\varphi_{x}+\psi) ^{2}+\rho_{3}\theta^{2}+\tau q^{2}\right) dx.\label{energy}
\end{equation}
Throughout the rest of this paper we assume that conditions $(A_{1})$ and $(A_{2})^{*}$ hold.
\begin{lemma}\label{lemmaenergie}
Let $(\varphi,\psi,\theta,q)$ be a solution of the system \eqref{1}. Then, the functional $E$ satisfies
\begin{equation}
E^{\prime
}(t)=-\beta \int^{1}_{0}q^{2}dx-\alpha(t)\int^{1}_{0}\psi_{t}h(\psi_{t}) dx \leq 0.
\label{energyde}
\end{equation}
\end{lemma}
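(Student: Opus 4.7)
The plan is to obtain the identity by direct energy computation: differentiate $E(t)$ under the integral sign, substitute the four PDEs of system \eqref{1} for the time derivatives $\varphi_{tt}$, $\psi_{tt}$, $\theta_t$, $q_t$, and then integrate by parts in $x$, exploiting the boundary conditions $\varphi_x(0,t)=\varphi_x(1,t)=\psi(0,t)=\psi(1,t)=q(0,t)=q(1,t)=0$ to kill every boundary contribution.

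Concretely, I would first write
\[
E'(t)=\int_0^1\!\Bigl(\rho_1\varphi_t\varphi_{tt}+\rho_2\psi_t\psi_{tt}+b\psi_x\psi_{xt}+k(\varphi_x+\psi)(\varphi_{xt}+\psi_t)+\rho_3\theta\theta_t+\tau q q_t\Bigr)\,dx,
\]
and then replace $\rho_1\varphi_{tt}$, $\rho_2\psi_{tt}$, $\rho_3\theta_t$, $\tau q_t$ using $\eqref{1}_1$--$\eqref{1}_4$. The key bookkeeping step is that, after integration by parts, every spatial-derivative term pairs with another identical term of opposite sign: $k\int\varphi_t(\varphi_x+\psi)_x\,dx$ combines with $k\int(\varphi_x+\psi)(\varphi_{xt}+\psi_t)\,dx$ and with $k\int\psi_t(\varphi_x+\psi)\,dx$; the $b\psi_{xx}$ term cancels $b\int\psi_x\psi_{xt}\,dx$; the cross-coupling term $\delta\int\psi_t\theta_x\,dx$ cancels $\delta\int\theta\psi_{xt}\,dx$ coming from the heat equation; and finally $\int\theta q_x\,dx$ cancels $\int q\theta_x\,dx$ coming from the Cattaneo equation. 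In each integration by parts, the boundary term vanishes: for $\varphi_x+\psi$ because both $\varphi_x$ and $\psi$ vanish at $x=0,1$; for $\psi_t\psi_x$, $\psi_t\theta$ because $\psi(\cdot,t)\equiv0$ at the endpoints implies $\psi_t$ also vanishes there; and for $\theta q$ because $q$ vanishes at the endpoints. What survives is exactly
\[
E'(t)=-\beta\int_0^1 q^2\,dx-\alpha(t)\int_0^1\psi_t\,h(\psi_t)\,dx.
\]

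To deduce the sign, I invoke hypothesis $(A_1)$ which yields $\alpha(t)\geq 0$, and the monotonicity part of $(A_2)^*$: since $h$ is non-decreasing with $h(0)=0$, one has $s\,h(s)\geq 0$ for every $s$ in its domain, so the dissipation integrand is pointwise non-negative; together with $\beta>0$ and $q^2\geq 0$ this gives $E'(t)\leq 0$.

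I do not expect any real obstacle here — the computation is the standard energy identity for a Timoshenko/Cattaneo system — but the one delicate point to check carefully is the collection of cancellations among the six coupling terms (the two $k$-terms, the two $\delta$-terms, and the $\theta q$ pair), since an error of sign in any one of them would leave a spurious cross term in $E'(t)$. I would therefore carry out these cancellations explicitly in pairs before asserting the final identity.
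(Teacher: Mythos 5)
Your proposal is correct and follows exactly the paper's route: the paper multiplies the first four equations of \eqref{1} by $\varphi_t,\psi_t,\theta,q$ respectively, integrates by parts using the boundary conditions $\eqref{1}_{5}$, and invokes $(A_1)$ and $(A_2)^{*}$ for the sign, which is precisely your computation (differentiating $E$ and substituting the PDEs is the same calculation). Your explicit tracking of the pairwise cancellations and of why each boundary term vanishes is more detailed than the paper's one-line proof, but the argument is identical.
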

\begin{proof}
By multiplying the first fourth equations in \eqref{1}, respectively, by $\varphi_{t}$, $\psi_{t}$, $\theta $ and $q$, using the integration by parts with respect to $x$ over $(0,1)$, the boundary conditions $\eqref{1}_{5}$ and the hypotheses $(A_{1})$ and $(A_{2})^{*}$, we obtain \eqref{energyde}.
\end{proof}
\begin{lemma}\label{lemmak1}
Let $(\varphi,\psi,\theta,q)$ be a solution of the system \eqref{1}. Then, the functional
 \begin{equation}
 K_{1}(t):=-\int^{1}_{0}(\rho_{1}\varphi\varphi_{t}+\rho_{2}\psi\psi_{t})dx, \label{k1}
 \end{equation}
verifies the following estimate
\begin{eqnarray}
K_{1}'(t)\leq &-&\rho_{1}\int^{1}_{0}\varphi_{t} ^{2}dx-\rho_{2}\int^{1}_{0}\psi_{t} ^{2}dx+c\int^{1}_{0}\psi_{x} ^{2}dx+k\int^{1}_{0}(\varphi_{x}+\psi ) ^{2}dx \label{k1'}\\
&+&\frac{\delta}{2}\int^{1}_{0}\theta ^{2}dx+\frac{1}{2}\int^{1}_{0}h ^{2}(\psi_{t})dx.\nonumber
\end{eqnarray}
\end{lemma}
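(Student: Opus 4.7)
The plan is a direct multiplier-style computation: differentiate $K_1$, substitute the evolution equations $(\ref{1})_1$ and $(\ref{1})_2$ to eliminate the second time derivatives, integrate by parts to move spatial derivatives off $\varphi$ and $\psi$, and finally absorb the resulting cross terms with Young's and Poincar\'e's inequalities.

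First, differentiating under the integral sign gives
\[
K_1'(t) = -\rho_1\int_0^1 \varphi_t^2\,dx - \rho_2\int_0^1 \psi_t^2\,dx - \int_0^1 \rho_1 \varphi\,\varphi_{tt}\,dx - \int_0^1 \rho_2 \psi\,\psi_{tt}\,dx,
\]
so the two ``good'' damping terms $-\rho_1\int\varphi_t^2$ and $-\rho_2\int\psi_t^2$ appear automatically. For the remaining two integrals I would substitute $\rho_1\varphi_{tt}=k(\varphi_x+\psi)_x$ from $(\ref{1})_1$ and $\rho_2\psi_{tt}=b\psi_{xx}-k(\varphi_x+\psi)-\delta\theta_x-\alpha(t)h(\psi_t)$ from $(\ref{1})_2$, then integrate by parts using the boundary conditions $\varphi_x(0,t)=\varphi_x(1,t)=0$ and $\psi(0,t)=\psi(1,t)=0$. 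This turns $-\int\rho_1\varphi(\rho_1\varphi_{tt}/\rho_1)dx$ into $k\int\varphi_x(\varphi_x+\psi)dx$, and the $\psi$-term into $b\int\psi_x^2\,dx+k\int\psi(\varphi_x+\psi)dx-\delta\int\psi_x\theta\,dx+\alpha(t)\int\psi h(\psi_t)dx$. Adding the two $k$-pieces recombines to $k\int(\varphi_x+\psi)^2\,dx$, giving the identity
\[
K_1'(t)= -\rho_1\!\int_0^1\!\varphi_t^2 -\rho_2\!\int_0^1\!\psi_t^2 + k\!\int_0^1\!(\varphi_x+\psi)^2 + b\!\int_0^1\!\psi_x^2 -\delta\!\int_0^1\!\psi_x\theta + \alpha(t)\!\int_0^1\!\psi\,h(\psi_t).
\]

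It remains to bound the last two (sign-indefinite) terms. For the thermal coupling, Young's inequality gives $-\delta\int\psi_x\theta\,dx\le\tfrac{\delta}{2}\int\psi_x^2\,dx+\tfrac{\delta}{2}\int\theta^2\,dx$. For the damping cross term, I would use the remark that $\alpha(t)\le\alpha(0)=:c$, then Young's inequality with conjugate weights $(2,2)$ to split $\alpha(t)\int\psi h(\psi_t)dx\le \tfrac{c^2}{2}\int\psi^2\,dx+\tfrac12\int h^2(\psi_t)\,dx$, and finally Poincar\'e's inequality on $H_0^1(0,1)$ to write $\int\psi^2\,dx\le C_p\int\psi_x^2\,dx$. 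Collecting the coefficients in front of $\int\psi_x^2\,dx$ into a single generic constant $c$ produces exactly the claimed inequality \eqref{k1'}.

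The main (minor) obstacle is book-keeping: keeping track of signs when integrating by parts in the mixed term $\delta\int\psi\theta_x\,dx$ (which vanishes at the boundary since $\psi$ does) and making sure the two $k$-integrals combine into the perfect square $k\int(\varphi_x+\psi)^2\,dx$ rather than leaving spurious cross terms. Everything else is a routine application of Young's and Poincar\'e's inequalities together with the standing assumption $(A_1)$ that $\alpha$ is bounded.
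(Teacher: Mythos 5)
Your computation is correct and follows exactly the paper's own argument: differentiate $K_1$, substitute $(\ref{1})_1$--$(\ref{1})_2$, integrate by parts using $\varphi_x=0$ and $\psi=0$ at the endpoints to reach the identity $K_1'=-\rho_1\int\varphi_t^2-\rho_2\int\psi_t^2+b\int\psi_x^2+k\int(\varphi_x+\psi)^2-\delta\int\theta\psi_x+\alpha(t)\int\psi h(\psi_t)$, then absorb the last two terms via Young, Poincar\'e and the bound $\alpha(t)\le c$ from $(A_1)$. No differences worth noting.
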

\begin{proof}
 By differentiating \eqref{k1} and using the first and second equations of \eqref{1}, we get
\begin{eqnarray*}
K_{1}'(t)
=\!\!&-&\!\!\! \rho_{1}\int^{1}_{0}\varphi_{t}^{2}dx-\rho_{2}\int^{1}_{0}\psi_{t}^{2}dx-\int^{1}_{0}k(\varphi _{x}+\psi )_{x}\varphi dx-\int^{1}_{0}(b \psi _{xx}-k(\varphi _{x}+\psi )\\&-&\delta \theta _{x}-\alpha (t)h(\psi _{t}))\psi dx.\\
\end{eqnarray*}
Integrating by parts and using the boundary conditions $\eqref{1}_{5}$, we have
$$K_{1}'(t)=-\rho_{1}\int^{1}_{0}\varphi_{t}^{2}dx-\rho_{2}\int^{1}_{0}\psi_{t}^{2}dx+b\int^{1}_{0}\psi_{x}^{2}dx+\int^{1}_{0}k(\varphi _{x}+\psi )^{2}dx$$
$$-\delta \int^{1}_{0}\theta \psi_{x}dx+\int^{1}_{0}\alpha (t)h(\psi _{t})\psi dx.$$
Applying Young's inequality, we obtain \eqref{k1'}.
\end{proof}
\begin{lemma}\label{lemmak2}
Let $(\varphi,\psi,\theta,q)$ be a solution of the system \eqref{1}. Then, the functional
\begin{equation}
 K_{2}(t):=\rho_{2}\int^{1}_{0}\psi\psi_{t}dx -\rho_{2}\int^{1}_{0}\varphi_{t}wdx-\delta\tau \int^{1}_{0}\psi qdx\label{k2},
 \end{equation}
satisfies, for any $\varepsilon >0 $
\begin{eqnarray}
K_{2}'(t)\leq &-&\left( b-2c\varepsilon \right) \int_{0}^{1}  \psi _{x}^{2}dx+c(\int_{0}^{1}\psi _{t}^{2}dx+\int_{0}^{1}q^{2}dx+\int_{0}^{1}h^{2}(\psi_{t})dx) \label{k2'}\\
&+&\rho_{1}\varepsilon\int_{0}^{1}\varphi_{t}^{2}dx, \nonumber
\end{eqnarray}
where $w$ is the solution of the problem
\begin{equation}
\left\{
\begin{array}{l}
-w_{xx}=\psi_{x}, \\
w(0)=w(1)=0.%
\end{array}%
\right.  \label{w}
\end{equation}
\end{lemma}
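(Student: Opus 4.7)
The plan is to compute $K_2'(t)$ by differentiating each of the three summands in \eqref{k2} and then using the four equations of system (\ref{1}) to eliminate the time derivatives $\psi_{tt}$, $\varphi_{tt}$, $q_t$, and $w_t$. The three pieces of $K_2$ are engineered so that several awkward cross terms cancel pairwise, leaving only the coercive $-b\int_0^1 \psi_x^2\,dx$ term plus remainders controllable by Young's inequality.

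First, for $\rho_2\int_0^1 \psi\psi_t\,dx$, differentiation yields $\rho_2\int_0^1 \psi_t^2\,dx + \rho_2\int_0^1 \psi\psi_{tt}\,dx$. Replacing $\rho_2\psi_{tt}$ via $(\ref{1})_2$ and integrating $b\int \psi\psi_{xx}$ by parts (using $\psi(0,t)=\psi(1,t)=0$) produces the leading $-b\int_0^1 \psi_x^2\,dx$ together with the nuisance terms $-k\int \psi(\varphi_x+\psi)\,dx$, $-\delta\int \psi\theta_x\,dx$, and $-\alpha(t)\int \psi h(\psi_t)\,dx$. Next, differentiating $-\delta\tau\int_0^1 \psi q\,dx$ and substituting $\tau q_t = -\beta q -\theta_x$ from $(\ref{1})_4$ produces a term $+\delta\int \psi\theta_x\,dx$ which cancels exactly the $-\delta\int \psi\theta_x\,dx$ produced above, leaving only the harmless contributions $-\delta\tau\int \psi_t q\,dx$ and $\delta\beta\int \psi q\,dx$.

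For the middle summand $-\rho_2\int \varphi_t w\,dx$, differentiation gives $-\rho_2\int \varphi_{tt}w\,dx - \rho_2\int \varphi_t w_t\,dx$. In the first term, substitute $\rho_1\varphi_{tt} = k(\varphi_x+\psi)_x$ from $(\ref{1})_1$ and integrate by parts; boundary terms vanish thanks to $\varphi_x|_{x=0,1}=0$ and $\psi|_{x=0,1}=0$, producing a multiple of $\int (\varphi_x+\psi)\,w_x\,dx$, which combines with the $-k\int \psi(\varphi_x+\psi)\,dx$ term left over from Step 1 (using the elliptic identity $w_x = \int_0^1\psi\,ds - \psi$ obtained by integrating \eqref{w}). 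For the $w_t$ contribution, differentiate \eqref{w} in time to get $-w_{txx} = \psi_{xt}$, and use $(\ref{1})_3$ to rewrite $\psi_{xt} = -(\rho_3 \theta_t + q_x)/\delta$; a standard one-dimensional elliptic estimate yields $\|w_t\|_{L^2}\le c(\|q\|_{L^2} + \|\theta\|_{L^2})$, so that Young's inequality applied to $-\rho_2\int \varphi_t w_t\,dx$ produces the $\rho_1\varepsilon\int \varphi_t^2\,dx$ term together with contributions absorbed into $c\int q^2\,dx$ and terms controlled via $E(t)$.

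After collecting, apply Young's inequality to each remaining cross product, using $\alpha(t)\le c$ from the Remark and Poincaré's inequality $\|\psi\|_{L^2}\le c\|\psi_x\|_{L^2}$ (valid since $\psi(0,t)=0$) to convert every $\psi$-factor into an $\varepsilon\|\psi_x\|_{L^2}^2$-type contribution; the constants are arranged so that the net coefficient of $\int \psi_x^2\,dx$ is $-(b-2c\varepsilon)$, while all other quadratic pieces get folded into the asserted $c\bigl(\int \psi_t^2 + \int q^2 + \int h^2(\psi_t)\bigr)\,dx$ bound. The main technical obstacle is the auxiliary function $w$: closing the loop between the elliptic estimate $\|w_x\|_{L^2}\le c\|\psi\|_{L^2}$ and the derived estimate for $\|w_t\|_{L^2}$, and in particular verifying that the combination of $\int(\varphi_x+\psi)\,w_x\,dx$ with $-k\int \psi(\varphi_x+\psi)\,dx$ absorbs into the $\psi_x^2$ bound with only an $O(\varepsilon)$ perturbation of the $-b$ coefficient.
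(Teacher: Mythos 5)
Your outline does follow the paper's strategy (differentiate \eqref{k2}, substitute the equations of \eqref{1}, let $-\delta\tau\int_0^1\psi q\,dx$ cancel the $-\delta\int_0^1\psi\theta_x\,dx$ term, finish with Young and Poincar\'e), but the two steps you yourself single out as delicate do not go through as you describe them. First, the $w_t$ term: rerouting through $(\ref{1})_3$ and claiming $\|w_t\|_{L^2}\le c(\|q\|_{L^2}+\|\theta\|_{L^2})$ is not justified. The problem $-(w_t)_{xx}=-(\rho_3\theta_t+q_x)/\delta$ only controls $\|w_{tx}\|_{L^2}$ by $c\big(\|q\|_{L^2}+\|\theta_t\|_{H^{-1}}\big)$, and $\theta_t$ is not bounded by $\theta$ (unwinding it via $(\ref{1})_3$ merely brings you back to $q$ and $\psi_t$); moreover, even if your bound held, it would put a $c\int_0^1\theta^2\,dx$ on the right-hand side, which \eqref{k2'} does not allow. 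The correct and much shorter route, used in the paper, is to differentiate \eqref{w} in $t$: $w_t$ solves the same elliptic problem with datum $\psi_t$, so $\|w_{tx}\|_{L^2}\le\|\psi_t\|_{L^2}$ and \eqref{w2} follows by Poincar\'e; Young's inequality then yields exactly the terms $\rho_1\varepsilon\int_0^1\varphi_t^2\,dx+c\int_0^1\psi_t^2\,dx$ of \eqref{k2'}.

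Second, the cancellation you invoke between $\int_0^1(\varphi_x+\psi)w_x\,dx$ and $-k\int_0^1(\varphi_x+\psi)\psi\,dx$ only happens if the term generated by $\varphi_{tt}$ carries the coefficient $-k$ exactly, which forces the middle summand of $K_2$ to be $+\rho_1\int_0^1\varphi_t w\,dx$; the $-\rho_2$ printed in \eqref{k2} is a misprint, as one sees from the paper's displayed $K_2'$, which contains $k\int_0^1(\varphi_x+\psi)_x w\,dx+\rho_1\int_0^1\varphi_t w_t\,dx$. Taking $-\rho_2$ literally, as you do, leaves an uncancelled multiple of $\int_0^1\varphi_x\psi\,dx$ that cannot be absorbed into \eqref{k2'}. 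Even with the corrected coefficient, your claim that the combination "absorbs into the $\psi_x^2$ bound with an $O(\varepsilon)$ perturbation of $b$" is not what happens: your identity $w_x+\psi=\int_0^1\psi\,ds$ turns it into $-k\big(\int_0^1\psi\big)\int_0^1(\varphi_x+\psi)\,dx$, whose absorption would require an $\varepsilon\int_0^1(\varphi_x+\psi)^2\,dx$ term absent from \eqref{k2'}. The paper instead writes the combination as $k\int_0^1 w_x^2\,dx-k\int_0^1\psi^2\,dx$, which is nonpositive by \eqref{w1} and is simply discarded (modulo a boundary contribution $-k(\int_0^1\psi)(\varphi(1)-\varphi(0))$ that the paper silently drops); the $2c\varepsilon$ loss on $b$ comes solely from Poincar\'e applied to $\delta\beta\int_0^1\psi q\,dx$ and $\alpha(t)\int_0^1\psi h(\psi_t)\,dx$. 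You should carry out this computation explicitly rather than assert it.
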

\begin{proof}
 By differentiation of \eqref{k2} and the use of the first, second and fourth equations of \eqref{1}, we get
$$K_{2}'(t)=\rho_{2}\int_{0}^{1}\psi _{t}^{2}dx+b\int_{0}^{1}\psi _{xx}\psi dx-k\int_{0}^{1}(\varphi_{x}+\psi )\psi dx-\delta\int_{0}^{1}\theta_{x}\psi dx-\alpha (t)\int_{0}^{1}\psi h(\psi_{t})dx$$
$$+k\int_{0}^{1}(\varphi_{x}+\psi )_{x}wdx+\rho_{1}\int_{0}^{1}\varphi_{t}w_{t} dx-\tau\delta\int_{0}^{1}\psi _{t}qdx+\delta\beta \int_{0}^{1}\psi qdx+\delta \int_{0}^{1}\theta _{x}\psi dx.$$
Integrating by parts the last equality, using \eqref{w} and the boundary conditions $\eqref{1}_{5}$, we have
$$K_{2}'(t)=\rho_{2}\int_{0}^{1}\psi _{t}^{2}dx-b\int_{0}^{1}\psi _{x}^{2} dx-k\int_{0}^{1}\psi^{2} dx+k\int_{0}^{1}w_{x}^{2} dx-\alpha (t)\int_{0}^{1}\psi h(\psi_{t})dx$$
$$+\rho_{1}\int_{0}^{1}\varphi_{t}w_{t} dx-\tau\delta\int_{0}^{1}\psi _{t}qdx+\delta\beta \int_{0}^{1}\psi qdx.$$
By a simple calculation, we easily deduce that the function $w$ satisfies the following estimates
 \begin{equation}
\int_{0}^{1}w_{x}^{2} dx\leq \int_{0}^{1}\psi^{2} dx,\label{w1}
\end{equation}
\begin{equation}
\int_{0}^{1}w_{t}^{2} dx\leq c\int_{0}^{1}\psi_{t}^{2} dx. \label{w2}
\end{equation}
Thanks to Young's and Poincar\'e's inequalities and \eqref{w1}-\eqref{w2}, we conclude that
\begin{eqnarray}
K_{2}'(t)\leq & &\! \! \!\! \! \!\! \! \!\! \! \!\rho_{2}\int_{0}^{1}\! \! \!\psi _{t}^{2}dx-b\int_{0}^{1}\! \! \!\psi _{x}^{2} dx+\frac{\rho_{1}}{4\varepsilon}\int_{0}^{1}w_{t}^{2} dx+\rho_{1}\varepsilon\int_{0}^{1}\varphi_{t}^{2} dx \\
&+&\tau\delta\varepsilon\int_{0}^{1}\psi _{t}^{2}dx
+\frac{\tau\delta}{4\varepsilon} \int_{0}^{1}q^{2}dx+c_{p}\varepsilon \int_{0}^{1}\psi_{x}^{2}dx+\frac{(\delta\beta)^{2}}{4\varepsilon}\int_{0}^{1}q^{2}dx\nonumber\\
&+&\varepsilon c_{p}\int_{0}^{1}\psi_{x}^{2} dx+\frac{c^{2}}{4\varepsilon} \int_{0}^{1}h^{2}(\psi_{t})dx.\nonumber
\end{eqnarray}
Therefore, we obtain \eqref{k2'}.
\end{proof}

\begin{lemma}\label{lemmak3}
Let $(\varphi,\psi,\theta,q)$ be a solution of the system \eqref{1}. Then, the functional
\begin{equation}
 K_{3}(t):=-\tau\rho_{3}\int^{1}_{0}q(\int^{x}_{0}\theta(t,y)dy)dx, \label{k3}
 \end{equation}
satisfies
\begin{eqnarray}
K_{3}'(t)\leq -\frac{\rho_{3}}{2}\int_{0}^{1}\theta^{2}dx+c\left( \int_{0}^{1}q^{2}dx+\int_{0}^{1}\psi_{t}^{2}dx\right). \label{k3'}
\end{eqnarray}
\end{lemma}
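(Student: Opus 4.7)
The plan is to differentiate $K_3$ in time, substitute using the two heat-flux equations $(\ref{1})_3$ and $(\ref{1})_4$, perform one integration by parts, and then absorb the cross terms with Young's inequality. The two ingredients that make the whole argument work cleanly are (i) the constraint $\int_0^1 \theta(x,t)\,dx = 0$ (enforced by our reduction to $\bar\theta$), and (ii) the boundary conditions $q(0,t)=0$ and $\psi_t(0,t)=0$, which make the antiderivatives $\int_0^x \theta\,dy$ and $\int_0^x \psi_{xt}\,dy$ well-behaved.

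First I would compute, using the product rule,
\begin{equation*}
K_3'(t) = -\tau\rho_3\int_0^1 q_t\Bigl(\int_0^x\theta\,dy\Bigr)dx - \tau\rho_3\int_0^1 q\Bigl(\int_0^x\theta_t\,dy\Bigr)dx.
\end{equation*}
For the first integral I substitute $\tau q_t = -\beta q - \theta_x$ from $(\ref{1})_4$. For the term containing $\theta_x$, I integrate by parts; since $F(x):=\int_0^x\theta\,dy$ satisfies $F(0)=0$ and $F(1)=\int_0^1\theta\,dy=0$, the boundary contributions vanish and I obtain $\int_0^1\theta_x F\,dx=-\int_0^1\theta^2\,dx$. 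This produces the crucial negative term $-\rho_3\int_0^1\theta^2\,dx$, together with a harmless term $\rho_3\beta\int_0^1 q F\,dx$.

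For the second integral I use $(\ref{1})_3$ in the form $\rho_3\theta_t = -q_x-\delta\psi_{xt}$, integrate from $0$ to $x$, and apply $q(0,t)=\psi_t(0,t)=0$ to get $\rho_3\int_0^x\theta_t\,dy = -q(x,t)-\delta\psi_t(x,t)$. This turns the second integral into $\tau\int_0^1 q^2\,dx + \tau\delta\int_0^1 q\psi_t\,dx$. Collecting everything,
\begin{equation*}
K_3'(t) = -\rho_3\int_0^1\theta^2\,dx + \rho_3\beta\int_0^1 qF\,dx + \tau\int_0^1 q^2\,dx + \tau\delta\int_0^1 q\psi_t\,dx.
\end{equation*}

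To finish, I would estimate the cross terms with Young's inequality, using the Cauchy--Schwarz bound $F(x)^2 = \bigl(\int_0^x\theta\,dy\bigr)^2 \le \int_0^1\theta^2\,dy$, which gives $\int_0^1 F^2\,dx \le \int_0^1\theta^2\,dx$. Choosing the Young parameter so that at most $\tfrac{\rho_3}{2}\int_0^1\theta^2\,dx$ is returned, I can absorb half of the $-\rho_3\int_0^1\theta^2\,dx$ term; the remaining contributions are bounded by $c\int_0^1 q^2\,dx$ and $c\int_0^1 \psi_t^2\,dx$, yielding \eqref{k3'}. No real obstacle is expected; the only point requiring care is keeping track of the boundary terms so that $\int_0^1\theta\,dx=0$ is actually invoked to eliminate the one term that could otherwise spoil the sign.
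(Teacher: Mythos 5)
Your proposal is correct and follows essentially the same route as the paper: differentiate $K_3$, substitute equations $(\ref{1})_3$ and $(\ref{1})_4$, integrate by parts using $\int_0^1\theta\,dx=0$ and the boundary conditions to reach exactly the paper's intermediate identity, and then close with Young's inequality together with $\int_0^1\bigl(\int_0^x\theta\,dy\bigr)^2dx\le \int_0^1\theta^2dx$. No discrepancies to report.
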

\begin{proof}
By differentiation of \eqref{k3} and the use of the third and fourth equations of \eqref{1}, we get
\begin{eqnarray} K'_{3}(t)&=&\rho_{3}\beta\int^{1}_{0}q(\int^{x}_{0}\theta(t,y)dy)dx+\rho_{3}\int^{1}_{0}
\theta_{x}(\int^{x}_{0}\theta(t,y)dy)dx \nonumber \\
&&+\tau\int^{1}_{0}q(\int^{x}_{0}q_{x}(t,y)dy)dx+\tau\delta\int^{1}_{0}q(\int^{x}_{0}\psi_{tx}(t,y)dy)dx.\nonumber
\end{eqnarray}
 By integrating the above equality over $(0,1)$ and using the boundary conditions $\eqref{1}_{5}$ (note also that $\int^{1}_{0}\theta dx=0$), we have
  $$ K'_{3}(t)=\rho_{3}\beta\int^{1}_{0}q(\int^{x}_{0}\theta(t,y)dy)dx-\rho_{3}\int^{1}_{0}\theta^{2}dx+\tau\int^{1}_{0}q^{2}dx
+\tau\delta\int^{1}_{0}q\psi_{t}dx.$$
Applying again Young's inequality and the fact that
$$\int^{1}_{0}(\int^{x}_{0}\theta(t,y)dy)^{2}dx\leq c\int^{1}_{0}\theta^{2}dx,$$
we arrive at \eqref{k3'}.
\end{proof}
\begin{lemma}\label{lemmak4}
Let $(\varphi,\psi,\theta,q)$ be a solution of the system \eqref{1}. Then, the functional
\begin{eqnarray}
 K_{4}(t):=\! \! \! &&\! \! \! \! \! \! \frac{\tau\rho_{2}}{k}\int^{1}_{0}\psi_{t}(\varphi_{x}+\psi)dx+\frac{b\tau\rho_{1}}{k^{2}}\int^{1}_{0}\varphi_{t}\psi_{x}dx \label{k4}\\
 &-&\frac{b\tau\rho_{3}}{\delta k}(\frac{\rho_{2}}{b}-\frac{\rho_{1}}{k})\int^{1}_{0}\theta\varphi_{t}dx+\frac{b\tau}{\delta k}(\frac{\rho_{2}}{b}-\frac{\rho_{1}}{k})\int^{1}_{0}q(\varphi_{x}+\psi)dx,\nonumber
\end{eqnarray}
satisfies
\begin{eqnarray}
K_{4}'(t)\leq \! \! \! \! \!&-&\! \! \! \! \!(\tau -2\varepsilon_{1})\int_{0}^{1}\! \! \! \! (\varphi_{x}+\psi )^{2}dx+C\left(\int_{0}^{1}\! \! \! \! \!\psi_{t}^{2}dx+\int_{0}^{1}\! \! \!  q^{2}dx+\int_{0}^{1}\! \! \! \! h^{2}(\psi_{t})dx\right)\label{k4-prime}\\
&+&\frac{b\rho_{3}}{\delta \rho_{1}}\left[ (\tau -\frac{\rho_{1}}{k \rho_{3}})(\frac{\rho_{2}}{b}-\frac{\rho_{1}}{k})-\frac{\tau\delta^{2}\rho_{1}}{bk \rho_{3}}\right]\int^{1}_{0}\theta_{x}(\varphi_{x}+\psi )dx,\nonumber
\end{eqnarray}
with $C=2\max(\frac{\tau\rho_{2}}{k}+\frac{1}{2},(\frac{b}{\tau k}(\frac{\rho_{2}}{b}-\frac{\rho_{1}}{k}))^{2}(\frac{\beta^{2}}{4\varepsilon_{1}}+\frac{\tau^{2}}{2}),\frac{c^{2}\tau^{2}}{4k^{2}\varepsilon_{1}})$ and $\varepsilon_{1}>0$.
\end{lemma}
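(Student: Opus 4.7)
The plan is to differentiate $K_{4}(t)$ term by term and substitute the equations of motion $(\ref{1})_{1}$--$(\ref{1})_{4}$ for the time derivatives $\varphi_{tt}$, $\psi_{tt}$, $\theta_{t}$ and $q_{t}$. Each of the four pieces of $K_{4}$ is a boundary--free bilinear pairing, so integration by parts in $x$ together with the boundary conditions in $(\ref{1})_{5}$ will move derivatives freely between the factors. The weights $\tau\rho_{2}/k$, $b\tau\rho_{1}/k^{2}$, $b\tau\rho_{3}/(\delta k)\cdot(\rho_{2}/b-\rho_{1}/k)$ and $b\tau/(\delta k)\cdot(\rho_{2}/b-\rho_{1}/k)$ have been chosen precisely so that the resulting terms of the form $\int(\varphi_{x}+\psi)_{x}\psi_{t}\,dx$, $\int \varphi_{t}\psi_{xt}\,dx$ and $\int \theta\,\psi_{xt}\,dx$ (and their symmetric counterparts) appearing in the four expansions can be paired and cancelled against each other.

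The first step is to differentiate the first two terms. Using $(\ref{1})_{2}$ to replace $\rho_{2}\psi_{tt}$ produces a principal term $-\tau\int(\varphi_{x}+\psi)^{2}dx$ together with auxiliary terms in $\psi_{x}(\varphi_{x}+\psi)$, $\theta_{x}(\varphi_{x}+\psi)$ and $\alpha(t)h(\psi_{t})(\varphi_{x}+\psi)$; using $(\ref{1})_{1}$ to replace $\rho_{1}\varphi_{tt}$ yields $-\int k(\varphi_{x}+\psi)_{x}\psi_{x}\,dx$ scaled by $b\tau/k^{2}$, which after integration by parts produces $+b\tau \int (\varphi_{x}+\psi)\psi_{xx}/k\,dx$ combining with the $b\psi_{xx}$ contribution from the $\psi_{tt}$-substitution so that the $\psi_{xx}$ terms telescope. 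I would then differentiate the third and fourth terms, substituting $\rho_{3}\theta_{t}$ from $(\ref{1})_{3}$ and $\tau q_{t}$ from $(\ref{1})_{4}$; the resulting mixed terms of the form $\int q_{x}\varphi_{t}\,dx$ are integrated by parts to match with $\int q\,\varphi_{xt}\,dx$ coming from the second integration by parts, and the $\delta\psi_{xt}$ contribution from $(\ref{1})_{3}$ lands on $\varphi_{t}$ so that, after another integration by parts, it combines with $\int q(\varphi_{x}+\psi)_{t}\,dx$ from the fourth term.

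After all the cancellations, the only term whose coefficient does not close up into a pure damping or positive-definite block is the coupling $\int_{0}^{1}\theta_{x}(\varphi_{x}+\psi)\,dx$, and a direct computation of its coefficient gives precisely
\begin{equation*}
\frac{b\rho_{3}}{\delta\rho_{1}}\left[\left(\tau-\frac{\rho_{1}}{k\rho_{3}}\right)\left(\frac{\rho_{2}}{b}-\frac{\rho_{1}}{k}\right)-\frac{\tau\delta^{2}\rho_{1}}{bk\rho_{3}}\right],
\end{equation*}
i.e.\ the stability number $\mu$ up to the explicit factor $b\rho_{3}/(\delta\rho_{1})$. This is the point where the precise choice of weights in $K_{4}$ is justified: any other combination would leave an uncontrolled $\int\theta_{x}(\varphi_{x}+\psi)$ term, whereas here the coefficient is exactly the obstruction to exponential decay recorded by $\mu$.

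Finally I would apply Young's inequality with parameter $\varepsilon_{1}>0$ to absorb the remaining cross terms: $\int \psi_{t}(\varphi_{x}+\psi)\,dx$, $\int q(\varphi_{x}+\psi)\,dx$ and $\int h(\psi_{t})(\varphi_{x}+\psi)\,dx$ are split so that the $(\varphi_{x}+\psi)^{2}$ portion costs $\varepsilon_{1}$ (two such splittings give the $-2\varepsilon_{1}$ correction), while the remaining $\psi_{t}^{2}$, $q^{2}$ and $h^{2}(\psi_{t})$ parts are bounded by constants depending on $\varepsilon_{1}$, $\tau$, $\rho_{i}$, $b$, $k$, $\beta$ and the $L^{\infty}$-bound $\alpha(t)\le c$ from the Remark; a bookkeeping of these weights produces exactly the constant $C$ stated in the lemma. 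The main obstacle is the algebraic bookkeeping in the third paragraph: one must verify that with the chosen weights, the coefficient of $\int\theta_{x}(\varphi_{x}+\psi)\,dx$ simplifies to the prescribed multiple of $\mu$, which requires expanding $(\rho_{2}/b-\rho_{1}/k)$ carefully and tracking the $\delta^{2}$ cross terms coming from the couplings in $(\ref{1})_{2}$ and $(\ref{1})_{3}$.
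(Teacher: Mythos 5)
Your proposal follows essentially the same route as the paper: differentiate $K_{4}$, substitute the four equations of \eqref{1}, integrate by parts with the boundary conditions so that the mixed terms cancel and the coefficient of $\int_{0}^{1}\theta_{x}(\varphi_{x}+\psi)\,dx$ collapses to $\frac{b\rho_{3}}{\delta\rho_{1}}\mu$, and then apply Young's inequality with parameter $\varepsilon_{1}$ to the two cross terms involving $(\varphi_{x}+\psi)$ to get the $-(\tau-2\varepsilon_{1})$ coefficient and the constant $C$. The only (harmless) bookkeeping slip is that after the cancellations the surviving cross terms are $\int q\psi_{t}$, $\int q(\varphi_{x}+\psi)$ and $\int \alpha(t)h(\psi_{t})(\varphi_{x}+\psi)$ rather than the list you give, but this does not affect the argument.
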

\begin{proof}
 By differentiation of \eqref{k4}, using \eqref{1} and integration over $(0,1)$, we get
 \begin{eqnarray*}
K_{4}'(t)=&&\! \! \! \! \! \! \! \! \frac{\tau}{2}\int^{1}_{0}(b\psi_{xx}-k(\varphi_{x}+\psi)-\delta\theta_{x}-\alpha(t)h(\psi_{t}))(\varphi_{x}+\psi)dx\\&+&\frac{\tau\rho_{2}}{k}\int^{1}_{0}\psi_{t}(\varphi_{x}+\psi)_{t}dx+\frac{b\tau}{k^{2}}\int^{1}_{0}(\varphi_{x}+\psi )_{x}\varphi_{x}+\varphi_{t}\psi_{tx}dx\\
&-&\frac{b\tau}{\delta k}(\frac{\rho_{2}}{b}-\frac{\rho_{1}}{k})\int^{1}_{0}(-(q_{x}+\delta \psi _{xt})\varphi_{t}+\theta (\varphi_{x}+\psi )_{x}) dx\\
&+&\frac{b}{\delta k}(\frac{\rho_{2}}{b}-\frac{\rho_{1}}{k})\int^{1}_{0}(-(\beta q+\theta _{x})(\varphi_{x}+\psi )+q (\varphi_{x}+\psi )_{t}) dx.
 \end{eqnarray*}
 By integration over $(0,1)$ and using the boundary conditions $\eqref{1}_{5}$, we have
\begin{eqnarray*}
 K_{4}'(t)=&-&\tau \int_{0}^{1}(\varphi_{x}+\psi )^{2}dx+\frac{\tau\rho_{2}}{k}\int_{0}^{1}\psi_{t}^{2}dx
 +\frac{b\tau}{\delta k}(\frac{\rho_{2}}{b}-\frac{\rho_{1}}{k})\int^{1}_{0}q\psi_{t} dx\\&-&\frac{b\beta}{\delta k}(\frac{\rho_{2}}{b}
 -\frac{\rho_{1}}{k})\int^{1}_{0}q(\varphi_{x}+\psi ) dx-\frac{\tau}{k}\int_{0}^{1}\alpha(t)h(\psi_{t})(\varphi_{x}+\psi)dx\\
 &+&\frac{b\rho_{3}}{\delta \rho_{1}}\left[ (\tau -\frac{\rho_{1}}{k \rho_{3}})(\frac{\rho_{2}}{b}-\frac{\rho_{1}}{k})-\frac{\tau\delta^{2}\rho_{1}}{bk \rho_{3}}\right] \int^{1}_{0}\theta_{x}(\varphi_{x}+\psi ) dx.
 \end{eqnarray*}
 Applying Young's inequality, we obtain \eqref{k4-prime}.
\end{proof}
Next, we define a Lyapunov functional $K$ and show that it is equivalent to the
energy functional $E$.
\begin{lemma}
Let $(\varphi,\psi,\theta,q)$ be a solution of the system \eqref{1}. Then, the functional
\begin{equation}\label{fonctionlyapo}
K(t):=NE(t)+K_{1}+N_{2}K_{2}+N_{3}K_{3}+N_{4}K_{4},
\end{equation}
where $N$ is sufficiently large, $N_{1}$ and $N_{2}$ are positive real numbers to be chosen properly, satisfies
\begin{equation}
c_{1}E(t)\leq K(t)\leq c_{2}E(t),\ \label{equation equivalen}
\end{equation}
for $c_{1}$ and $c_{2}$ two positive constants and
\begin{eqnarray}
K'(t)\leq \! \! \! \!&-&\! \! \! \!(\rho_{1}-N_{2}\rho_{1}\varepsilon )\int_{0}^{1}\! \! \varphi_{t}^{2}dx-\rho_{2}\int_{0}^{1}\! \!\psi _{t}^{2}dx-(N_{2}(b-2c\varepsilon )-c) \int_{0}^{1}\! \!\psi _{x}^{2}dx\label{k'}\\
&-&\int_{0}^{1}(N_{4}(\tau -2\varepsilon_{1})-k))(\varphi_{x}+\psi) ^{2}dx-(\frac{N_{3}\rho_{3}}{2}-\frac{\delta}{2})\int_{0}^{1}\theta^{2}dx\nonumber\\
&-&(N\beta -cN_{2}-cN_{3}-cN_{4}) \int_{0}^{1}q^{2}dx+c\int_{0}^{1}(\psi _{t}^{2}+h^2(\psi_{t}))dx\nonumber\\
&+&N_{4}\frac{b\rho_{3}}{\delta \rho_{1}}\left[ (\tau -\frac{\rho_{1}}{k \rho_{3}})(\frac{\rho_{2}}{b}-\frac{\rho_{1}}{k})-\frac{\tau\delta^{2}\rho_{1}}{bk \rho_{3}}\right]\int^{1}_{0}\theta_{x}(\varphi_{x}+\psi ) dx.\nonumber
\end{eqnarray}
\end{lemma}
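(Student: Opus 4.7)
The plan is to verify the two claims separately, using the auxiliary functionals $K_1, K_2, K_3, K_4$ constructed in Lemmas \ref{lemmak1}--\ref{lemmak4} and the energy identity of Lemma \ref{lemmaenergie}. The argument is essentially a bookkeeping one, organized around a careful ordering of the parameters.

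For the equivalence \eqref{equation equivalen}, I would first prove the auxiliary pointwise bound $|K(t) - N E(t)| \leq c\, E(t)$ with $c$ independent of $N$. Each of $K_1, K_2, K_3, K_4$ is a quadratic combination of components of the solution, so applying the Cauchy--Schwarz and Young inequalities, the Poincaré inequality on $H_\star^1(0,1)$ and $H_0^1(0,1)$, and the auxiliary estimates \eqref{w1}--\eqref{w2} on $w$, one dominates each $|K_i|$ by a multiple of the energy $E$. The integral $\int_0^x \theta(t,y)\,dy$ appearing in $K_3$ is controlled via the same trick used in the proof of Lemma \ref{lemmak3}, namely $\int_0^1 (\int_0^x \theta\,dy)^2 dx \leq c\int_0^1 \theta^2 dx$. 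Adding these bounds gives $|K - NE| \leq c E$, and then choosing $N$ large enough yields $(N-c)E \leq K \leq (N+c)E$, which is \eqref{equation equivalen}.

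For the derivative estimate \eqref{k'}, I would differentiate \eqref{fonctionlyapo} and substitute the estimates \eqref{energyde}, \eqref{k1'}, \eqref{k2'}, \eqref{k3'}, \eqref{k4-prime}. Using that the nonlinear dissipation term $\alpha(t)\int \psi_t h(\psi_t)\,dx$ in \eqref{energyde} is nonnegative (by $(A_2)^*$), one drops it to get $NE'(t) \leq -N\beta \int_0^1 q^2 dx$. Collecting coefficients then reproduces the right-hand side of \eqref{k'}: the coefficients of $\varphi_t^2, \psi_t^2, \psi_x^2, (\varphi_x+\psi)^2, \theta^2, q^2$ match, the residual $\psi_t^2$ and $h^2(\psi_t)$ contributions from $K_1', K_2'$ and $K_4'$ are left lumped into $c\int_0^1 (\psi_t^2 + h^2(\psi_t))\,dx$, and the last term carrying the stability factor $\mu$ is kept verbatim from \eqref{k4-prime}.

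The main (mild) obstacle is the choice of parameters so that every indicated coefficient in \eqref{k'} has the correct sign. The natural order is: (i) fix $\varepsilon_1$ small so that $\tau - 2\varepsilon_1 > 0$; (ii) fix $\varepsilon$ small so that $b - 2c\varepsilon > 0$ and $N_2\rho_1\varepsilon < \rho_1$; (iii) pick $N_4$ large so that $N_4(\tau - 2\varepsilon_1) > k$; (iv) pick $N_2$ large so that $N_2(b-2c\varepsilon) > c$; (v) pick $N_3$ large so that $N_3\rho_3/2 > \delta/2$; (vi) finally pick $N$ so large that $N\beta > c(N_2+N_3+N_4)$ and simultaneously $N > c$ so that the equivalence in \eqref{equation equivalen} holds with $c_1, c_2 > 0$. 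The residual term $c\int_0^1 (\psi_t^2 + h^2(\psi_t))\,dx$ is deliberately left untreated here; it will be absorbed in a later step of Subsection \ref{sub31} by exploiting the nonlinear dissipation $\alpha(t)\int \psi_t h(\psi_t)\,dx$ via the convex function $H$ and the hypothesis $(A_2)^*$.
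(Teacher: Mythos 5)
Your proposal is correct and follows essentially the same route as the paper: bound $\vert K(t)-NE(t)\vert$ by the cross terms coming from $K_{1},\dots,K_{4}$ and control them via Young, Poincar\'e and Cauchy--Schwarz (together with the bounds on $w$ and on $\int_{0}^{x}\theta\,dy$) to get $\vert K-NE\vert\leq cE$ and hence \eqref{equation equivalen} for $N$ large, then differentiate \eqref{fonctionlyapo} and sum the estimates of Lemmas \ref{lemmaenergie}--\ref{lemmak4}, dropping the nonnegative dissipation term. One minor remark: in your parameter ordering, step (ii) constrains $\varepsilon$ through $N_{2}$ before $N_{2}$ is fixed in step (iv); the paper avoids this circularity by taking $\varepsilon=1/(2N_{2})$, and in any case those sign conditions are only needed later in the proof of Theorem \ref{Theop}, not for \eqref{k'} itself.
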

\begin{proof}
From Lemmas \ref{lemmak1} to \ref{lemmak4}, we find
\begin{eqnarray*}
\vert K(t)-NE(t)\vert \leq \! \! \! \!&&\! \! \! \! \! \!\rho_{1}\int^{1}_{0}\vert \varphi \varphi_{t}\vert dx+(\rho_{2}+N_{2})\int^{1}_{0}\vert \psi\psi_{t}\vert dx+N_{2}\rho_{1}\int^{1}_{0}\vert \varphi_{t}w\vert dx\\&+&N_{2}\tau\delta\int^{1}_{0}\vert \psi q\vert dx+\tau\rho_{3}\int^{1}_{0}\vert q(\int^{x}_{0}\theta(t,y)dy)\vert dx.
\end{eqnarray*}
Applying Young, Poincar\'e and Cauchy-Schwartz inequalities and the fact that
$$\varphi_{x}^{2}\leq 2(\varphi_{x}+\psi)^{2}+2\psi^{2}\leq 2(\varphi_{x}+\psi)^{2}+2c\psi_{x}^{2},$$
we obtain \eqref{equation equivalen}, and therefore we get
$$K(t)\sim E(t).$$
For to prove \eqref{k'}, it suffices to differentiate \eqref{fonctionlyapo} and use lemmas \ref{lemmaenergie}-\ref{lemmak4}.
This ends the proof of the lemma.
\end{proof}
\begin{theorem}
\label{Theop} Let us suppose that
$$\mu =\left[ (\tau -\frac{\rho_{1}}{k \rho_{3}})(\frac{\rho_{2}}{b}-\frac{\rho_{1}}{k})-\frac{\tau\delta^{2}\rho_{1}}{bk \rho_{3}}\right]=0.$$
Then there exist positive constants $k_{1}$, $k_{2}$, $k_{3}$ and $\varepsilon_{0}$ such that the energy $E(t)$ associated with \eqref{1} satisfies
\begin{equation}
\left. E(t)\leq k_{3} H^{-1}_{1} \left( k_{1} \int_{0}^{t} \alpha(s)\ ds+
k_{2} \right), \hspace{1.cm} \text{for all} \ \ t\geq 0, \right.  \label{theoremenergy1}
\end{equation}
where
\[
H_{1}(t) = \int_{t}^{1} \frac{1}{H_{2}(s)}ds,\hspace{1cm} H_{2}(t)=tH^{^{%
\prime }}(\varepsilon_{0}t).
\]
Here $H_{1}$ is a strictly decreasing and convex function on $(0,1]$, with $\displaystyle\lim_{t\rightarrow0}H_{1}(t)= +\infty$.
\end{theorem}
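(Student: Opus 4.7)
The plan is to exploit the hypothesis $\mu=0$ to eliminate the obstructive last term in the derivative estimate \eqref{k'} of the Lyapunov functional $K$, and then to treat the residual nonlinear dissipation $\int_0^1(\psi_t^2+h^2(\psi_t))\,dx$ via a convexity/Jensen argument built on the function $H(x)=\sqrt{x}\,h_0(\sqrt{x})$. First, I would fix the parameters $\varepsilon,\varepsilon_1>0$ sufficiently small and then pick $N_2,N_3,N_4$ large enough (in the order $N_4\gg N_3\gg N_2\gg 1$) so that every coefficient in \eqref{k'} except possibly the $(\psi_t^2+h^2(\psi_t))$ term becomes a strictly negative multiple of the corresponding energy density; finally $N$ is taken large enough to keep equivalence $K\sim E$ and to make the coefficient in front of $\int q^2\,dx$ negative. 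Because $\mu=0$, the last line in \eqref{k'} drops out, and combining with \eqref{equation equivalen} I obtain a differential inequality of the form
\begin{equation*}
K'(t)\leq -c\,E(t)+c\int_0^1\bigl(\psi_t^2+h^2(\psi_t)\bigr)\,dx.
\end{equation*}

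Next I would treat the nonlinear term by splitting $(0,1)=\Omega_1\cup\Omega_2$ with $\Omega_1=\{x:|\psi_t(x,t)|\leq\varepsilon\}$ and $\Omega_2=\{x:|\psi_t(x,t)|>\varepsilon\}$. On $\Omega_2$, assumption $(A_2)^{*}$ gives the equivalence $|h(s)|\sim|s|$, so
\begin{equation*}
\int_{\Omega_2}\bigl(\psi_t^2+h^2(\psi_t)\bigr)\,dx\leq c\int_{\Omega_2}\psi_t\,h(\psi_t)\,dx\leq -\frac{c}{\alpha(t)}\,E'(t),
\end{equation*}
by Lemma \ref{lemmaenergie}. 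On $\Omega_1$, using $h_0(|s|)\leq|h(s)|\leq h_0^{-1}(|s|)$ and the fact that $H$ is of class $C^1$ and strictly convex near $0$, Jensen's inequality applied to the finite measure $dx$ on $\Omega_1$ yields
\begin{equation*}
\int_{\Omega_1}\bigl(\psi_t^2+h^2(\psi_t)\bigr)\,dx\leq H^{-1}\!\left(c\int_{\Omega_1}\psi_t\,h(\psi_t)\,dx\right)\leq H^{-1}\!\left(-\frac{c}{\alpha(t)}\,E'(t)\right).

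\end{equation*}

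I would then multiply the resulting estimate for $K'(t)$ by $\alpha(t)$ (which is bounded and nonincreasing by $(A_1)$) to build the modified functional $\mathcal{F}(t)=\alpha(t)K(t)+c\,E(t)$, which remains equivalent to $E(t)$ and satisfies
\begin{equation*}
\mathcal{F}'(t)\leq -c\,\alpha(t)E(t)+c\,\alpha(t)\,H^{-1}\!\left(-\frac{c}{\alpha(t)}E'(t)\right).
\end{equation*}
Applying the Young-type inequality $AB\leq H^{*}(A)+H(B)$ with the Legendre conjugate $H^{*}$, choosing $A=H'(\varepsilon_0 E(t)/E(0))$ and absorbing the resulting $-E'(t)$ term into a further modified Lyapunov functional $\mathcal{G}\sim E$, one arrives at the central differential inequality
\begin{equation*}
\mathcal{G}'(t)\leq -c\,\alpha(t)\,E(0)\,H_2\!\left(\frac{E(t)}{E(0)}\right),\qquad H_2(t)=tH'(\varepsilon_0 t).
\end{equation*}

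The final step is then routine integration: since $H_2>0$ on $(0,1]$ and $1/H_2$ is the integrand defining $H_1$, dividing by $H_2(E/E(0))$, integrating over $(0,t)$, and inverting $H_1$ yields
\begin{equation*}
E(t)\leq k_3\,H_1^{-1}\!\left(k_1\int_0^t\alpha(s)\,ds+k_2\right),
\end{equation*}
which is exactly \eqref{theoremenergy1}. The main obstacle I expect is the delicate Jensen/Young step on the small-velocity region $\Omega_1$: one has to feed the right argument into $H$ so that the conjugate $H^{*}$ absorbs a controllable multiple of $-E'(t)$ without destroying the equivalence $\mathcal{G}\sim E$, and this is where the precise choice of $\varepsilon_0$ and of the auxiliary constants in $\mathcal{G}$ is crucial.
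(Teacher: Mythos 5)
Your proposal is correct and follows essentially the same route as the paper: drop the $\mu$-term in \eqref{k'}, choose $\varepsilon,\varepsilon_1$ small and $N_2,N_3,N_4,N$ large to reach $K'\leq -dE+c\int(\psi_t^2+h^2(\psi_t))$, split $(0,1)$ into small- and large-velocity regions, use $s^2+h^2(s)\leq 2H^{-1}(sh(s))$ with Jensen on $\Omega_1$, and then the weighted functional $\alpha K+cE$ together with the Legendre conjugate $H^{*}$ applied to $A=H'(\varepsilon_0 E/E(0))$ to obtain $\mathcal{G}'\leq -c\alpha H_2(E/E(0))$ and integrate. The only (cosmetic) difference is that the paper first dispatches the case of $h_0$ linear separately by a direct exponential estimate, whereas you fold it into the general convexity scheme.
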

\begin{proof}
The estimate \eqref{k'}, with $\mu=0$, takes the form
\begin{eqnarray}
K'(t)\leq \! \! \! &-&(\rho_{1}-N_{2}\rho_{1}\varepsilon )\int_{0}^{1} \varphi_{t}^{2}dx-\rho_{2}\int_{0}^{1}\psi _{t}^{2}dx-(N_{2}(b-2c\varepsilon )-c) \int_{0}^{1}\psi _{x}^{2}dx\nonumber\\
&-&\int_{0}^{1}(N_{4}(\tau -2\varepsilon_{1})-k))(\varphi_{x}+\psi) ^{2}dx-(\frac{N_{3}\rho_{3}}{2}-\frac{\delta}{2})\int_{0}^{1}\theta^{2}dx\nonumber\\
&-&(N\beta -cN_{2}-cN_{3}-cN_{4}) \int_{0}^{1}q^{2}dx+c\int_{0}^{1}(\psi _{t}^{2}+h^2(\psi_{t}))dx.\nonumber
\end{eqnarray}
Now, we choose the constants in the above estimate as follows: first  $\varepsilon$ and $\varepsilon_{1}$ are such that
$$\varepsilon =\frac{1}{2N_{2}}\hspace{0.5cm}\text{and}\hspace{0.5cm}\varepsilon_{1}<\frac{\tau}{2}.$$
After that, we choose $N$, $N_{2}$, $N_{3}$ and $N_{4}$ sufficiently large such that $N_{2}>\frac{2c}{b}$, $N_{3}>\frac{\delta}{\rho_{3}}$, $N_{4}>\frac{k}{\tau -2\varepsilon_{1}}$ and $N>\frac{c}{\beta}(\frac{2c}{b}+\frac{\delta}{\rho_{3}}+\frac{k}{\tau -2\varepsilon_{1}})$.
Then, we deduce that
\begin{equation}
K^{\prime }(t)\leq -dE(t)+c\int^{1}_{0}(\psi_{t}^{2}+h^{2}(\psi_{t})) dx, \label{k'energ}
\end{equation}
where $d=\min(\rho_{1}-N_{2}\rho_{1}\varepsilon ,\rho_{2},N_{2}(b-2c\varepsilon )-c,N_{4}(\tau -2\varepsilon_{1})-k,\frac{N_{3}\rho_{3}}{2}-\frac{\delta}{2},N\beta -cN_{2}-cN_{3}-cN_{4}).$
\begin{itemize}
\item[\protect\underline{First case:}] Let $h_{0}$ be a linear function over $%
[0,\varepsilon ]$. The hypothesis $(A_{2})^{*}$ implies that
\[
c_{1}^{\prime }|s|\leq |h(s)|\leq c_{2}^{\prime }|s|,\hspace{1cm}\text{for all}
\ s\in \mathbb{R}.
\]%
Consequently, by multiplying inequality \eqref{k'energ} by $\alpha(t)$, we obtain
\begin{eqnarray}
\alpha (t)K^{\prime }(t)\! \! \! &\leq &\! \! \! -d\alpha (t)E(t)+c\ \alpha
(t)\int_{0}^{1}(\psi _{t}^{2}+h^{2}(\psi _{t}))dx,  \label{312} \\
&\leq &-d\alpha (t)E(t)+c\alpha (t)\int_{0}^{1}(\frac{1}{c_{1}^{\prime }}%
|\psi _{t}h(\psi _{t})|+c_{2}^{\prime }|\psi _{t}h(\psi _{t})|)dx, \nonumber \\
&\leq &-d\alpha (t)E(t)+c_{0}\alpha (t)\int_{0}^{1}\psi _{t}h(\psi
_{t})dx=-d\alpha (t)E(t)-c_{0}E^{\prime }(t),  \nonumber
\end{eqnarray}%
where $c_{0}=c(\frac{1}{c_{1}^{\prime }}+c_{2}^{\prime })$.\newline
Using now hypotesis $(A_{1})$, this yields
\begin{equation}
(\alpha K+c_{0}E)^{\prime }(t)\leq \alpha (t)K^{\prime
}(t)+c_{0}E^{\prime }(t)\leq -d\alpha (t)E(t).  \label{maj}
\end{equation}%
We integrate the inequality \eqref{maj} and use the fact that $\alpha K+c_{0}E\sim E$,  we obtain for some $k,\ c>0$,
\begin{equation}
E(t)\leq k\exp(-dc\int_{0}^{t}\alpha(s)ds). \label{eqfinal}
\end{equation}
Finally, by a simple computation we get \eqref{theoremenergy1}.
\end{itemize}

\begin{itemize}
\item[\protect\underline{Second case:}] Let $h_{0}$ be a non-linear function over $[0,\varepsilon ]$. We assume that $\max (r,h_{0}(r))<\varepsilon$, where $r$ is defined in the hypothesis $(A_{2})^{*}$.\\
  Let $\varepsilon _{1}=\min (r,h_{0}(r))$, we deduce from the hypothesis $ (A_ {2})^{*} $ that
\[
\frac{h_{0}(\varepsilon _{1})}{\varepsilon }|s|\leq \frac{h_{0}(|s|)}{|s|}%
|s|\leq |h(s)|\leq \frac{h_{0}^{-1}(|s|)}{|s|}|s|\leq \frac{%
h_{0}(\varepsilon )}{\varepsilon _{1}}|s|,
\]%
for all $s$ satisfying $\DST \varepsilon_{1}\leq |s|\leq \varepsilon $.\\
Then, the estimates in hypothesis $(A_{2})^{*}$ become
\begin{equation}
\left\{
\begin{array}{l}
h_{0}(|s|)\leq |h(s)|\leq h_{0}^{-1}(|s|),\hspace{1.7cm}\text{for all}\hspace{%
1em}|s|\leq \varepsilon _{1} ,\\
c_{1}^{^{\prime }}|s|\leq |h(s)|\leq c_{2}^{^{\prime }}|s|,\hspace{6.9em}%
\text{for all}\hspace{0.9em}|s|\geq \varepsilon _{1},%
\end{array}%
\right.   \label{313}
\end{equation}
and we have

\begin{equation}
s^{2}+h^{2}(s)\leq 2H^{-1}(sh(s)) .\label{est3}
\end{equation}
To estimate the last term of (\ref{k'energ}), we consider the following partition of $(0.1)$:
\[
\Omega_{1}=\{x\in (0,1); |\psi_{t}|\leq\varepsilon_{1}\},\
\Omega_{2}=\{x\in (0,1); |\psi_{t}|>\varepsilon_{1}\} .
\]
Then, we obtain
\begin{equation}
\left. \psi_{t}h(\psi_{t})\leq H(r^{2}) \ \ \text{and}\ \ \psi_{t}h(\psi_{t})\leq
r^{2} \hspace{0.5cm}\text{on}\ \Omega_{1} .\right.  \label{315}
\end{equation}
Now, we apply Jensen's inequality to the following term
\[
I(t):=\frac{1}{|\Omega_{1}|}\int_{\Omega_{1}}\psi_{t}h(\psi_{t}) dx,
\]%
and we infer that
\begin{equation}
H^{-1}(I(t))\geq c \int_{\Omega_{1}}H^{-1}(\psi_{t}h(\psi_{t})) dx. \label{est4}
\end{equation}
Using (\ref{313}), (\ref{est3}) and (\ref{est4}), then the right-hand side of \eqref{k'energ} multiplied by $\alpha(t)$ becomes
\begin{eqnarray*}
\alpha(t)\int_{0}^{1}(\psi_{t}^{2}+h^{2}(\psi_{t})) dx &=&
\alpha(t)\int_{\Omega_{1}}(\psi_{t}^{2}+h^{2}(\psi_{t})
)dx+\alpha(t)\int_{\Omega_{2}}(\psi_{t}^{2}+h^{2}(\psi_{t})) dx,\\
&\leq& 2\alpha(t) \int_{\Omega _{1}}H^{-1}(\psi_{t}h(\psi_{t})) dx\\
&+&\alpha(t)\int_{\Omega _{2}}(|\psi_{t}|\frac{1}{c^{\prime }_{1}}%
\vert h(\psi_{t})\vert+c^{\prime }_{2}|\psi_{t}||h(\psi_{t})|) dx ,\\
&\leq& c\alpha(t)H^{-1}(I(t))
+\alpha(t)c\int_{0}^{1}\psi_{t}h(\psi_{t}) dx, \\
&\leq& c\alpha(t)H^{-1}(I(t))-cE^{\prime }(t).
\end{eqnarray*}
 Consequently, the estimate \eqref{k'energ} gives
\begin{equation}
\left. R^{\prime }_{0}(t) \leq -d\alpha (t) E(t)+c\alpha
(t)H^{-1}(I(t))\right.  ,\label{318}
\end{equation}
where $R_{0}=\alpha K+cE$. \\
On the one hand, for $\varepsilon_{0}<r^{2}$, using (\ref{318}), $H^{\prime }\geq0$ and $H^{''}\geq0 $ over $(0,r^{2}]$ and $E^{\prime }\leq0$ the functional $R_{1}$ defined by
\[
R_{1}(t):=H^{\prime }\bigg(\varepsilon_{0}\frac{E(t)}{E(0)}\bigg)R_{0}(t)+c_{0}E(t),
\]
is equivalent to $E(t).$ \\
On the other hand, using the fact that $\varepsilon_{0}\frac{E^{^{\prime }}(t)}{E(0)}H^{\prime \prime
}(\varepsilon_{0}\frac{E(t)}{E(0)})R_{0}(t)\leq 0$ and (\ref{318}), we conclude that
\begin{eqnarray}
R^{\prime }_{1}(t)\! \! \!&=&\! \! \!\varepsilon_{0}\frac{E^{^{\prime }}(t)}{E(0)}%
H^{''}(\varepsilon_{0}\frac{E(t)}{E(0)})R_{0}(t)+H^{\prime
}(\varepsilon_{0}\frac{E(t)}{E(0)})R_{0}^{'}(t)+c_{0}E^{\prime }(t)\label{r1'}\\
&\leq& -d\alpha (t)E(t)H^{\prime }(\varepsilon_{0}\frac{E(t)}{E(0)})+c\alpha(t)H^{^{\prime }}(\varepsilon_{0}\frac{E(t)}{E(0)})H^{-1}(I(t))+c_{0}E^{\prime}(t).\nonumber
\end{eqnarray}

Our goal now is to estimate the second term in the right-hand side of \eqref{r1'}. For that purpose, we introduce the convex conjugate $H^{*}$ of $H$ defined by
\begin{equation}\label{conjugate}
H^{\ast}(s)=s(H^{\prime })^{-1}(s)-H((H^{\prime })^{-1}(s)\text{ for } s\in(0,H^{\prime }(r^{2})),
\end{equation}
and $ H ^ {*} $  satisfies the following Young inequality:
\begin{equation}\label{ableq}
AB\leq H^{\ast}(A)+H(B) \text{ for } A\in (0,H^{^{\prime }}(r^{2})),\ B\in (0,r^{2}).
\end{equation}
Now, taking  $A=H^{\prime }(\varepsilon_{0}\frac{E(t)}{E(0)})$ and $%
B=H^{-1}(I(t))$, we obtain
\begin{eqnarray}
R^{\prime }_{1}(t)&\leq& -d\alpha (t)E(t)H^{\prime }(\varepsilon_{0}%
\frac{E(t)}{E(0)})+c\alpha(t)H^{\ast}\left( H^{\prime }(\varepsilon_{0}%
\frac{E(t)}{E(0)})\right) \nonumber \\
&&+c \alpha(t)H\left( H^{-1}(I(t)\right)+c_{0}E^{\prime }(t)\nonumber\\
&\leq &-d\alpha (t)E(t)H^{\prime }(\varepsilon_{0}%
\frac{E(t)}{E(0)})+c\varepsilon_{0}\frac{E(t)}{E(0)}\alpha(t) H^{\prime }(\varepsilon_{0}\frac{E(t)}{E(0)})\nonumber\\
&&- c\alpha(t) H(\varepsilon_{0}\frac{E(t)}{E(0)})+c \alpha(t)I(t) +c_{0}E^{\prime }(t) \nonumber\\
&\leq &-d\alpha (t)E(t)H^{\prime }(\varepsilon_{0}%
\frac{E(t)}{E(0)})+c\varepsilon_{0}\frac{E(t)}{E(0)}\alpha(t) H^{\prime}(\varepsilon_{0}\frac{E(t)}{E(0)})- cE^{\prime }(t)+c_{0}E^{\prime }(t).\nonumber\label{r1'2}\hspace{2cm}
\end{eqnarray}
With a suitable choice of $ \varepsilon_{0} $ and $c_ {0}$, we deduce from the last inequality that
\begin{equation}
R^{\prime }_{1}(t)\leq -(dE(0)-c\varepsilon_{0})\alpha (t)\frac{E(t)}{E(0)%
}H^{\prime }(\varepsilon_{0}\frac{E(t)}{E(0)})\leq -k\alpha(t)H_{2}(\frac{%
E(t)}{E(0)}),\label{r1'3}
\end{equation}
where $k=dE(0)-c\varepsilon_{0}>0$ and $H_{2}(s)=sH^{'}(\varepsilon_{0}s)$.\\
	
Since $E(t)\sim R_{1}(t)$, then there exist $a_{1}$ and $a_{2}$ such that
$$a_{1}R_{1}(t)\leq E(t)\leq a_{2}R_{1}(t).$$\label{319}
We set now $R(t)=\frac{a_{1}R_{1}(t)}{E(0)}$. It is clear that  $R(t)\sim E(t)$. We use the fact that $H_{2}^{'}(t),\ H_{2}(t)>0$ over $(0,1]$ (this is due to the fact that $H$ is strictly convex on $(0,r^{2}]$) and we deduce from (\ref{r1'3}) that
\[
R^{\prime }(t)\leq-k_{1}\alpha(t)H_{2}(R(t)),\hspace{0,5cm}\text{ for all }t\in \mathbb{R_{+}}, \]
with  $k_{1}  >0$.\\
By integrating the last inequality, we obtain

$$
H_{1}(R(t))\geq H_{1}(R(0))+k_{1}\int^{t}_{0}\alpha(s)\ ds.
$$
Finally, using the fact that $H_{1}^{-1}$ is decreasing (because $H_{1}$ is also), we have
\[
R(t)\leq H^{-1}_{1} \left( k_{1} \int_{0}^{t} \alpha(s)\ ds+
k_{2} \right), \hspace{1cm}\text{with} \ k_{2}>0.
\]
Taking into account that $E(t)\sim R(t),$ we deduce (\ref{theoremenergy1}).
\end{itemize}
\end{proof}
\subsubsection{Examples}
In the following, we will apply the inequality \eqref{theoremenergy1} on some examples in order to show explicit stability results in term of asymptotic profiles in time. For that, we choose the function $H$ strictly convex near zero.
\begin{itemize}
\item[\underline{Example 1.}] \ \\
Let $h$ be a function that satisfies
\[
c_{3}\min(\vert s\vert,\vert s\vert^{p})\leq \vert h(s)\vert \leq c_{4}\max
(\vert s\vert,\vert s\vert^{\frac{1}{p}}),
\]
with some $c_{3}$, $c_{4}>0$ and $p\geq1$. \\
For $h_{0}(s)=cs^{p} $, hypothesis $(A_{2})^{*}$ is verified.
Then $H(s)=cs^{\frac{p+1}{2}}$. \vspace{0.2cm}\\
Therefore, we distinguish the following two cases:\\
$\bullet$ If p=1, we have $h_{0}$ is linear and  $H_{2}(s)=cs $, $H_{1}(s)=-%
\frac{\ln(s)}{c}$ and $H_{1}^{-1}(t)=\exp(-ct)$.\newline
Applying (\ref{theoremenergy1}) of Theorem \ref{Theop}, we conclude that
\[
E(t)\leq k_{3}\exp(-c(k_{1}\int^{t}_{0}\alpha(s)\
ds+k_{2})).
\]

$\bullet$ If $p>1$; this implies that $h_{0}$ is nonlinear and we have $H_{2}(s)=c\frac{p+1}{2}%
\varepsilon_{0}^{\frac{p-1}{2}}s^{\frac{p-1}{2}} $
 and
\[
H_{1}(t)=\int_{t}^{1}\frac{1}{\delta}s^{-\frac{p-1}{2}}\ ds =\frac{2}{\delta
(1-p)}-\frac{2}{\delta (1-p)}t^{\frac{p-1}{2}},\hspace{1cm}\text{with }\delta =c\frac{p+1}{2}\varepsilon_{0}^{\frac{p-1}{2}}
.\]
Therefore,
\[
H_{1}^{-1}(t)=(\delta\frac{p-1}{2}t+1)^{-\frac{2}{p-1}} .
\]
Using again (\ref{theoremenergy1}), we obtain
\[
E(t)\leq H_{1}^{-1}( k_{1}\int_{0}^{t}\alpha(s)\ ds +k_{2})=(\delta\frac{p-1%
}{2}( k_{1}\int_{0}^{t}\alpha(s)\ ds +k_{2})+1)^{-\frac{2}{p-1}} .
\]
\item[\underline{Example 2.}] \ \\
 Let $h_{0}(s)=\exp(-\frac{1}{s})$, this yields $H(s)=\sqrt{s}\exp(-%
\frac{1}{\sqrt{s}})$ and
\[
H_{2}(s)=(\frac{\sqrt{s}}{2\sqrt{\varepsilon}_{0}}+\frac{1}{2\varepsilon_{0}}%
)\exp(-\frac{1}{\sqrt{\varepsilon_{0}s}}) .
\]
Moreover, we have
\begin{eqnarray*}
H_{1}(t)&=&\int^{1}_{t}\left( \frac{1}{\frac{\sqrt{s}}{2\sqrt{\varepsilon}_{0}}+\frac{1}{2\varepsilon_{0}}}\right) \exp(\frac{1}{\sqrt{\varepsilon_{0}s}})ds\\ &\leq &\int^{1}_{t}\frac{2\sqrt{\varepsilon}_{0}}{\sqrt{s}}\exp(\frac{1}{\sqrt{\varepsilon_{0}s}})ds\\
&\leq &c\int^{1}_{t}\frac{1}{2s\sqrt{\varepsilon_{0}s}}\exp(\frac{1}{\sqrt{\varepsilon_{0}s}})ds=c\exp(\frac{1}{\sqrt{\varepsilon_{0}t}})-c\exp(\frac{1}{\sqrt{\varepsilon_{0}}}).
\end{eqnarray*}
Then,
\[
t\leq \varepsilon_{0}^{-1}\left( \ln\left( \frac{H_{1}(t)+c\exp(\frac{1}{\sqrt{\varepsilon_{0}s}})}{c}\right) \right)^{-2} .
\]
Replacing $t$ by $\displaystyle H_{1}^{-1}\left(
k_{1}\int_{0}^{t}\alpha(s)\ ds +k_{2}\right) $ in the last inequality, we find 
\[
H_{1}^{-1}\left( k_{1}\DST \int_{0}^{t}\alpha(s)\ ds +k_{2}\right) \leq
 \varepsilon_{0}^{-1}\left(  \ln\left( \frac{k_{1}\DST \int_{0}^{t}\alpha(s)\ ds +k_{2}+c\exp(\frac{1}{\sqrt{\varepsilon_{0}}})}{c}\right) \right)^{-2}.
\]
Therefore,
 $$E(t)\leq k_{3}\varepsilon_{0}^{-1}\left(  \ln\left( \frac{k_{1}\DST \int_{0}^{t}\alpha(s)\ ds +k_{2}+c\exp(\frac{1}{\sqrt{\varepsilon_{0}}})}{c}\right) \right)^{-2}.$$%

\item[\underline{Example 3.}] \ \\
Let $h_{0}(s)=\frac{1}{s}\exp(-\frac{1}{s^{2}})$. Following the same steps in exemple 2 we find that the energy of (\ref{1}) satisfies
$$E(t)\leq \varepsilon \left( \ln(\frac{k_{1}\int_{0}^{t}\alpha(s)\ ds +k_{2}+c\exp(\frac{1}{\varepsilon_{0}})}{c})\right)^{-1}.
$$
\item[\underline{Example 4.}] \ \\
Let  $h_{0}(s)=\frac{1}{s}\exp (-\frac{1}{4}(\ln s)^{2})$. Then, we have $H(s)=\exp (-%
\frac{1}{4}(\ln s)^{2}),$ \\ $%
H_{2}(s)=-\frac{1}{2}\frac{\ln \varepsilon_{0}s}{\varepsilon_{0}}\exp (-%
\frac{1}{4}(\ln \varepsilon_{0}s)^{2})$ and
$H_{1}(t)=\int_{t}^{1}-2\frac{\varepsilon_{0}}{\ln \varepsilon_{0}s}\exp (%
\frac{1}{4}(\ln \varepsilon_{0}s)^{2}).$ \\
As $\DST \lim_{s\longrightarrow0} \frac{4\varepsilon_{0}^{2}s}{(\ln(\varepsilon_{0}s))^{2}}=0$, then the function $s\mapsto \frac{4\varepsilon_{0}^{2}s}{(\ln(\varepsilon_{0}s))^{2}}$ is bounded on $(0,1]$, and we infer that 

\[
H_{1}(t)\leq c\int_{t}^{1}-\frac{1}{2}\frac{\ln \varepsilon_{0}s}{%
\varepsilon_{0}s}\exp (\frac{1}{4}(\ln s)^{2})\ ds=\exp (\frac{1}{4}(\ln
\varepsilon_{0}t)^{2})-\underbrace{\exp (\frac{1}{4}(\ln
\varepsilon_{0})^{2})}_{c_{1}}.
\]
Hence, we have
\[
t\leq \frac{1}{\varepsilon_{0}}\exp\left( -2\left( \ln
(H_{1}(t))+c_{1}\right)^{\frac{1}{2}}\right).
\]
Replacing $t$ by $\displaystyle H_{1}^{-1}\left( k_{1}\int_{0}^{t}\alpha(s)\
ds +k_{2}\right) $ in the last inequality, we find
\[
E(t)\leq k_{3}H_{1}^{-1}\left( k_{1}\int_{0}^{t}\alpha(s)\ ds +k_{2}\right)=
\frac{k_{3}}{\varepsilon_{0}}\exp\left( -2\left( \ln k_{1}\int_{0}^{t}\alpha(s)\
ds +k_{2}+c_{1}\right)^{\frac{1}{2}}\right).
\]
\end{itemize}

\subsection{The case $\mu \neq 0$  and  $\alpha(t)=1$.} \ \\
This section is devoted to the statement and the proof of the stability result for the system \eqref{1} when $\mu \neq 0$  and  $\alpha(t)=1$. \\ We have the following theorem.
\begin{theorem}\label{th3}
Let us suppose that conditions $(A_{1})$ and $(A_{2})^{*}$ hold, then for
$$\mu =\left[ (\tau -\frac{\rho_{1}}{k \rho_{3}})(\frac{\rho_{2}}{b}-\frac{\rho_{1}}{k})-\frac{\tau\delta^{2}\rho_{1}}{bk \rho_{3}}\right]\neq 0,$$
the energy solution of \eqref{1} satisfies

\begin{equation}
 E(t)\leq H_{2}^{-1}(\frac{c}{t}),\label{theoremenergy2}
 \end{equation}
where
\[
H_{2}(t)=tH^{^{\prime }}(\varepsilon_{0}t) \mbox{ with } \displaystyle\lim_{t\rightarrow0}H_{2}(t)=0.
\]

\end{theorem}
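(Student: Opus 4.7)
The plan is to mimic the Lyapunov functional strategy of Theorem \ref{Theop}, but with the obstruction that the cross-term in \eqref{k'} carrying the coefficient $\mu$ no longer vanishes. Starting from \eqref{k'} with $\alpha(t)\equiv 1$, we have
\begin{equation*}
K'(t)\leq -dE(t)+c\int_{0}^{1}\bigl(\psi_t^{2}+h^{2}(\psi_t)\bigr)dx+N_{4}\frac{b\rho_{3}}{\delta \rho_{1}}\,\mu\int_{0}^{1}\theta_{x}(\varphi_{x}+\psi)\,dx.
\end{equation*}
The first move is to absorb the unwanted cross-term using Young's inequality:
\begin{equation*}
\Bigl|\mu\int_{0}^{1}\theta_{x}(\varphi_{x}+\psi)\,dx\Bigr|\leq \eta\int_{0}^{1}(\varphi_{x}+\psi)^{2}dx+\frac{\mu^{2}}{4\eta}\int_{0}^{1}\theta_{x}^{2}\,dx,
\end{equation*}
with $\eta$ small enough to be absorbed in the $(\varphi_x+\psi)$ dissipation term. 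The leftover $\int_0^1 \theta_x^{2}\,dx$ is not controlled by $E$, so one works with a second-order energy $E_{2}(t)$ obtained by differentiating \eqref{1} in time; thanks to the regularity granted by Theorem \ref{thp} and the dissipativity proved in Lemma \ref{lemmonoton}, $E_{2}(t)$ is bounded by $E_{2}(0)$.

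Next, I would derive, in place of \eqref{k'energ}, an inequality of the form
\begin{equation*}
K'(t)\leq -\tilde d\,E(t)+c\int_{0}^{1}\bigl(\psi_t^{2}+h^{2}(\psi_t)\bigr)dx+cE_{2}(0).
\end{equation*}
Then, splitting $(0,1)=\Omega_{1}\cup\Omega_{2}$ as in the second case of Theorem \ref{Theop}, applying Jensen to the nonlinear piece on $\Omega_{1}$, and using the Young inequality \eqref{ableq} built from the convex conjugate $H^{*}$ with the choice $A=H'(\varepsilon_{0}E(t)/E(0))$, $B=H^{-1}(I(t))$, I would arrive at
\begin{equation*}
R_{1}'(t)\leq -k\,H_{2}\!\Bigl(\frac{E(t)}{E(0)}\Bigr)+cE_{2}(0)\,H'\!\Bigl(\varepsilon_{0}\frac{E(t)}{E(0)}\Bigr),
\end{equation*}
for the functional $R_{1}(t):=H'(\varepsilon_{0}E(t)/E(0))R_{0}(t)+c_{0}E(t)$ already used in Theorem \ref{Theop}. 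Since $H'(\varepsilon_{0}E(t)/E(0))\leq C\,H_{2}(E(t)/E(0))/E(t)$ near zero, the second-order contribution can be handled by a smallness argument on $\varepsilon_{0}$ and $E(0)$ (or by enlarging the denominator).

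Finally, integrating the resulting differential inequality $R_{1}'(t)\leq -k_{1}H_{2}(R(t))$ on $(0,t)$, where $R(t)\sim E(t)$, and exploiting the fact that $H_{2}$ is strictly increasing with $\lim_{s\to 0^{+}}H_{2}(s)=0$, yields
\begin{equation*}
t\,H_{2}(R(t))\leq c,\qquad\text{hence}\qquad E(t)\leq H_{2}^{-1}\!\Bigl(\frac{c}{t}\Bigr),
\end{equation*}
which is \eqref{theoremenergy2}. The main obstacle is the $\mu$-cross-term: controlling $\int_{0}^{1}\theta_{x}^{2}\,dx$ requires passing to a higher-order energy, and propagating this extra data through the convex-analysis machinery (in particular, verifying that the residual term $cE_{2}(0)H'(\varepsilon_{0}E(t)/E(0))$ is dominated by the dissipation $kH_{2}(E(t)/E(0))$ for small energies) is where the argument is most delicate, and it is precisely what degrades the rate from $H_{1}^{-1}$-type in Theorem \ref{Theop} to the $H_{2}^{-1}(c/t)$ polynomial-type rate here.
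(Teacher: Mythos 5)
Your overall architecture is the right one --- the obstruction is indeed the $\mu$-cross-term, a second-order energy $\tilde E$ (your $E_2$) is indeed needed, and the final integration step ($tH_2(E(t)/E(0))\leq\int_0^tH_2(E(s)/E(0))\,ds\leq R_1(0)$, hence $E(t)\leq E(0)H_2^{-1}(c/t)$) matches the paper. But there is a genuine gap in how you use the second-order energy. You bound $\int_0^1\theta_x^2\,dx$ by the \emph{constant} $cE_2(0)$ and carry a residual term $cE_2(0)$ (later $cE_2(0)H'(\varepsilon_0 E(t)/E(0))$) on the right-hand side of the differential inequality. Such a constant cannot be absorbed: the inequality $K'\leq-\tilde d\,E+\dots+cE_2(0)$ only forces $E$ down to a floor of order $E_2(0)/\tilde d$, not to zero. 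Your proposed rescue --- that $H'(\varepsilon_0 E/E(0))\leq C\,H_2(E/E(0))/E$ lets the dissipation dominate --- fails for exactly the reason hidden in that inequality: since $H_2(s)=sH'(\varepsilon_0 s)$, one has $H_2(E/E(0))=(E/E(0))\,H'(\varepsilon_0E/E(0))$, so the dissipation is \emph{smaller} than the residual by the factor $E/E(0)\to0$. Shrinking $\varepsilon_0$ does not help because the offending coefficient $E_2(0)$ is fixed initial data.

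The paper's proof closes this gap differently: it does not freeze $\tilde E$ at time zero but builds it into the Lyapunov functional, $L=N(E+\tilde E)+K_1+N_2K_2+N_3K_3+N_4K_4$, so that the dissipation $\tilde E'=-\beta\int q_t^2-\int\psi_{tt}^2h'(\psi_t)\leq0$ is available at every time $t$. The pointwise-in-time bound on $\theta_x$ comes from Cattaneo's law $(\ref{1})_4$, $\theta_x=-\tau q_t-\beta q$, giving $\int_0^1\theta_x^2\,dx\leq c\bigl(\int_0^1q^2\,dx+\int_0^1q_t^2\,dx\bigr)$; the $q^2$ part is absorbed by $-N\beta\int q^2$ from $E'$ and the $q_t^2$ part by $-N\beta\int q_t^2$ from $\tilde E'$, for $N$ large. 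This yields the clean inequality $L'\leq-d'E+c\int_0^1(\psi_t^2+h^2(\psi_t))\,dx$ with no constant residual, after which the convex-analysis machinery runs exactly as in Theorem \ref{Theop} and the loss of the factor $\alpha$ in the time-integration is what degrades the rate to $H_2^{-1}(c/t)$. You should replace your static bound $\int\theta_x^2\leq cE_2(0)$ by this dynamic absorption via Cattaneo's law and the augmented functional.
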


\begin{proof}
Let $(\varphi,\psi,\theta,q)$ be a solution of the system \eqref{1}. First, we define
\begin{equation}
E(t):=\frac{1}{2}\int_{0}^{1}\left( \rho_{1} \varphi_{t}^{2}+\rho_{2}\psi _{t}^{2}+b \psi _{x}^{2}+k(\varphi_{x}+\psi) ^{2}+\rho_{3}\theta^{2}+\tau q^{2}\right) dx,
\end{equation}
and
\begin{equation}
\tilde{E}(t):=\frac{1}{2}\int_{0}^{1}\left( \rho_{1} \varphi_{tt}^{2}+\rho_{2}\psi _{tt}^{2}+b \psi _{tx}^{2}+k(\varphi_{tx}+\psi_{t}) ^{2}+\rho_{3}\theta_{t}^{2}+\tau q_{t}^{2}\right) dx.
\end{equation}
Then, the functional $E$ satisfies
\begin{equation}
E^{\prime
}(t)=-\beta \int^{1}_{0}q^{2}dx-\int^{1}_{0}\psi_{t}h(\psi_{t}) dx \leq 0.
\end{equation}
Analogously, the functional $\tilde{E}$ satisfies
\begin{equation}
\tilde{E}^{\prime
}(t)=-\beta \int^{1}_{0}q_{t}^{2}dx-\int^{1}_{0}\psi_{tt}^{2}h'(\psi_{t}) dx \leq 0.
\end{equation}
\\
Using the results in Subsection \ref{sub31} (recall the expressions of the functionals $K_{1},...,K_{4}$) we have the following Lemma.
\begin{lemma}
Let $(\varphi,\psi,\theta,q)$ be a solution of the system \eqref{1}. Then, the functional

\begin{equation}\label{L(t)}
L(t):=N(E(t)+\tilde{E}(t))+K_{1}+N_{2}K_{2}+N_{3}K_{3}+N_{4}K_{4},
\end{equation}
satisfies
\begin{eqnarray}\label{l'}
L'(t)&\leq& -d'(t)+c\int_{0}^{1}(\psi _{t}^{2}+h^2(\psi_{t}))dx,
\end{eqnarray}
for $N$ large enough and $d^{'}>0$.
\end{lemma}
\begin{proof}
 By differentiation of \eqref{L(t)}, and using \eqref{k'} and Young's inequality, we obtain
 \begin{eqnarray}
 L'(t)\leq -dE(t)+c\int_{0}^{1}(\psi _{t}^{2}+h^2(\psi_{t}))dx+c\int_{0}^{1}(\theta_{x}^{2}+(\varphi_{x}+\psi)^{2})dx\\
 -N\beta \int^{1}_{0}q_{t}^{2}dx-N\int^{1}_{0}\psi_{tt}^{2}h'(\psi_{t}) dx.\nonumber
\end{eqnarray}
Now, from $(\ref{1})_{4}$, we deduce that
 $$\int_{0}^{1}\theta_{x}^{2}dx\leq c\left( \int_{0}^{1}q^{2}dx+\int_{0}^{1}q_{t}^{2}dx\right).$$
 Consequently, we get
 \begin{eqnarray}
 L'(t)\leq -d'E(t)+c\int_{0}^{1}(\psi _{t}^{2}+h^2(\psi_{t}))dx-(\beta N-c)\int_{0}^{1}q_{t}^{2}dx\\
- N\int^{1}_{0}\psi_{tt}^{2}h'(\psi_{t}) dx.\hspace{4cm}\nonumber
 \end{eqnarray}
 where $d'=d-c>0$ and $d$ is the same constant that appears in \eqref{k'energ}. Finally, we choose $N$ large enough and using the monotonie of the function $h$ we arrive at (\ref{l'}).
\end{proof}
Now, using the following partion of $(0,1)$ defined in Subsection \ref{sub31}, the right-hand side of \eqref{l'} becomes
\begin{eqnarray*}
\int_{0}^{1}(\psi_{t}^{2}+h^{2}(\psi_{t})) dx =
\int_{\Omega_{1}}(\psi_{t}^{2}+h^{2}(\psi_{t})
)dx+\int_{\Omega_{2}}(\psi_{t}^{2}+h^{2}(\psi_{t})) dx.
\end{eqnarray*}
Now, the estimates \eqref{313}-\eqref{est4} imply that
\begin{eqnarray*}
\int_{0}^{1}(\psi_{t}^{2}+h^{2}(\psi_{t})) dx &\leq& 2 \int_{\Omega _{1}}H^{-1}(\psi_{t}h(\psi_{t})) dx\\
&&+\int_{\Omega _{2}}(|\psi_{t}|\frac{1}{c^{\prime }_{1}}%
\vert h(\psi_{t})\vert+c^{\prime }_{2}|\psi_{t}||h(\psi_{t})|) dx \\
&\leq& cH^{-1}(I(t))
+c\int_{0}^{1}\psi_{t}h(\psi_{t}) dx.\\
%
\end{eqnarray*}
 Consequently,
\begin{eqnarray*}
L'(t)&\leq& -d'E(t)+cH^{-1}(I(t))
+c\int_{0}^{1}\psi_{t}h(\psi_{t}) dx+c\beta \int_{0}^{1}q^{2}dx\\
&\leq& -d'E(t)+cH^{-1}(I(t))-cE'(t).
\end{eqnarray*}
Hence, we deduce that
\begin{equation}
\left. (L+cE)^{\prime }(t) \leq -d' E(t)+cH^{-1}(I(t)).\right.
\end{equation}
We then define
\[
R_{1}(t):=H^{\prime }(\varepsilon_{0}\frac{E(t)}{E(0)})(L+cE)(t)+c_{0}E(t),
\]
which verifies
\begin{eqnarray}
R^{\prime }_{1}(t)
\leq -d_{1}E(t)H^{\prime }(\varepsilon_{0}\frac{E(t)}{E(0)})+cH^{^{\prime }}(\varepsilon_{0}\frac{E(t)}{E(0)})H^{-1}(I(t))+\epsilon E^{\prime}(t),
\end{eqnarray}
as we have  $\varepsilon_{0}\frac{E^{^{\prime }}(t)}{E(0)}H^{\prime \prime
}(\varepsilon_{0}\frac{E(t)}{E(0)})R_{0}(t)\leq 0$.
\\
We recall the definition of the convex conjugate $ H ^ {*} $ of $H$, given by (\ref{conjugate}), which satisfies the following Young inequality:
\[
AB\leq H^{\ast}(A)+H(B) \text{ for } A\in (0,H^{^{\prime }}(r^{2})),\ B\in (0,r^{2}).
\]
With the same choice of $A$ and $B$ as in (\ref{ableq}), we obtain
\begin{eqnarray}
R^{\prime }_{1}(t)\leq -d_{1} E(t)H^{\prime }(\varepsilon_{0}%
\frac{E(t)}{E(0)})+c\varepsilon_{0}\frac{E(t)}{E(0)}\alpha(t) H^{\prime}(\varepsilon_{0}\frac{E(t)}{E(0)})- cE^{\prime }(t)+\epsilon E^{\prime }(t)\nonumber.
\end{eqnarray}
With a suitable choice of $ \varepsilon_{0} $ and $\epsilon $, we deduce from the above inequality that
\begin{equation}
R^{\prime }_{1}(t)\leq -(dE(0)-c\varepsilon_{0}) \frac{E(t)}{E(0)%
}H^{\prime }(\varepsilon_{0}\frac{E(t)}{E(0)})\leq -k\alpha(t)H_{2}(\frac{%
E(t)}{E(0)}),\label{r1'3}
\end{equation}
where $k=dE(0)-c\varepsilon_{0}>0$ and $H_{2}(s)=sH^{'}(\varepsilon_{0}(s))$.\\
	
Finally, we have
\[
R_{1}^{\prime }(t)\leq-k_{1}H_{2}(\frac{E(t)}{E(0)}),\hspace{0,5cm}\text{ for all }t\in \mathbb{R_{+}}, \]
with  $k_{1}  >0$, which yields
$$\DST tH_{2}(\frac{E(t)}{E(0)})\leq \int^{t}_{0}H_{2}(\frac{E(s)}{E(0)})ds\leq -(R_{1}(t)-R_{1}(0))\leq R_{1}(0).$$
Then, we easily deduce that
$$\DST H_{2}(\frac{E(t)}{E(0)})\leq \frac{R_{1}(0)}{t}.$$
Thus,
$$ E(t)\leq E(0)H_{2}^{-1}(\frac{R_{1}(0)}{t}).$$
This concludes the proof of Theorem \ref{th3}.

\end{proof}
\subsubsection{Examples}
\ \\

\underline{Example 1:}
Let $h_{0}(s)=cs^{p} $. Then $H(s)=cs^{\frac{p+1}{2}}$. \vspace{0.2cm}\\
Therefore, we distinguish the following two cases:\\
$\bullet$ If p=1, we have $h_{0}$ is linear and $H_{2}^{-1}(t)=cs$.\newline
Applying (\ref{theoremenergy2}) of Theorem \ref{th3}, we conclude that
\[
E(t)\leq \frac{c}{t}.
\]
$\bullet$ If $p>1$; this implies that $h_{0}$ is nonlinear and we have $H_{2}(s)=cs^{\frac{p-1}{2}}.$
Therefore,
\[
H_{2}^{-1}(t)=ct^{\frac{2}{p-1}}.
\]
Using (\ref{theoremenergy2}), we obtain
\[
E(t)\leq ct^{-\frac{2}{p-1}}.
\]
\underline{Examples 2:}
 Let $h$ be given by $h(x)=\frac{1}{x^{3}}\exp(-\frac{1}{x^{2}})$ and we choose $h_{0}(x)=\frac{1+x^2}{x^3}\exp(-\frac{1}{x^{2}})$, we obtain $H(x)=\frac{1+x}{x}\exp(-%
\frac{1}{x})$ and
$ H_{2}(x)=\frac{\exp(-\frac{1}{\varepsilon_{0}x})}{\varepsilon_{0}^{3}x^2}.$ \\
Then, we use the following property :
$$\lim_{x\rightarrow 0^+}\exp(\frac{1}{\varepsilon_{0}x})H_{2}(x)=+\infty
,$$
and we deduce that $$\exp(-\frac{1}{\varepsilon_{0}x})\leq H_{2}(x).$$
We infer that there exists $x_0>0$ such that,
$$ \exp(-\frac{1}{\varepsilon_{0}x})\leq H_{2}(x) \mbox{ on } (0,x_0].$$
Consequently, the energy of the solution of \eqref{1} satisfies the estimate

$$E(t)\leq c(\ln(t))^{-1}.$$

\end{document}